\newtheorem{theorem}{Theorem}[section]
\newtheorem*{theorem*}{Theorem}
\newtheorem{lemma}{Lemma}[section]
\newtheorem{corollary}[theorem]{Corollary}
\newtheorem{proposition}{Proposition}[section]
\newtheorem{problem}[theorem]{Problem}
\def\Ric{\text{Ric}}
\def\R{\Bbb R}
\def\Ric{\operatorname{Ric}}
\def \R {{\Bbb R}}
\def\Ric{{\operatorname{Ric}}}
\numberwithin{equation}{section}
\begin{document}
	\title[Ricci pinching]{Is a complete Riemannian manifold with positively pinched Ricci curvature compact}

\author{Lei Ni}

\address{Lei Ni. Zhejiang Normal University, Jinhua, Zhejiang, China, 321004}
\address{department of Mathematics, University of California, San Diego, La Jolla, CA 92093, USA}
\email{lni.math.ucsd@gmail.com}


\subjclass[2010]{}

\begin{abstract} A result of R. Hamilton asserts that any convex hypersurface in an Euclidian space with pinched second fundamental form must be compact. Partly inspired by this result,  twenty years ago, in \cite{Ancient}, Remark 3.1 on page 650,  the author formulated a problem asking if a complete Riemannian manifold with positively pinched Ricci curvature must be compact. There are several recent progresses, which are all rigidity results concerning the flat metric except the special case for the steady solitons.   In this note we provide a detailed alternate proof of Hamilton's result, in view of the recent proof via the mean curvature flow requiring additional assumptions and that the original argument by Hamilton does lack  of complete details. The proof uses a result of the author in 1998 concerning quasi-conformal maps.  The proof here allows a generalization as well. We dedicate this article to commemorate R. Hamilton, the creator of the Ricci flow, who also made fundamental contributions to many other geometric flows.
\end{abstract}

\maketitle

\section{Introduction}
In the study of the convergence and asymptotic shapes of the mean curvature flow, motivated by a simplified alternate proof of Huisken's convergence result,  Richard Hamilton \cite{Hamilton94} proved the following result.

\begin{theorem}[Hamilton, 94]\label{thm:ham} Let $M^n (n\ge 2)$ be a smooth strictly convex complete hypersurface
bounding a region in $\mathbb{R}^{n+1}$. Suppose that its second fundamental form
is $\epsilon$-pinched in the sense that
\begin{equation}\label{eq:pinch1} II_{ij} \ge  \epsilon H g_{ij}
\end{equation}
where $g_{ij}$ is the induced Riemannian metric of $M$, $II_{ij}$ the second fundamental form,
and its trace $H$ is the mean curvature, for some $\epsilon > 0$. Then $M^n$ is compact.
\end{theorem}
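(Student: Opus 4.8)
\medskip
\noindent\textbf{Proof proposal.} The plan is to route everything through the Gauss map $\nu\colon M^{n}\to\mathbb{S}^{n}$. For a strictly convex hypersurface bounding a convex body $K\subset\mathbb{R}^{n+1}$ the map $\nu$ is a local diffeomorphism, its differential being (up to the usual identification) the positive‑definite shape operator, and it is injective, since two distinct points of $M$ sharing an outer normal would force the segment joining them to lie on $M$, contradicting strict convexity; hence $\nu$ is a diffeomorphism of $M$ onto an open set $\Omega:=\nu(M)\subseteq\mathbb{S}^{n}$. The pinching hypothesis \eqref{eq:pinch1} makes $\nu$ quasiconformal: in a principal frame one has $\nu^{*}g_{\mathbb{S}^{n}}=\sum_{i}\lambda_{i}^{2}(\theta^{i})^{2}$ and $g=\sum_{i}(\theta^{i})^{2}$, so the singular values of $d\nu$ are the principal curvatures, and $0<\epsilon H\le\lambda_{i}\le H$ gives $\lambda_{\max}/\lambda_{\min}\le 1/\epsilon$. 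Thus $\nu$ is a $K$-quasiconformal diffeomorphism with $K=K(\epsilon,n)$.

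Assume toward a contradiction that $M$ is non-compact. Then the closed convex body $K$ with $\partial K=M$ is unbounded, so its recession cone contains a unit vector $v$. For every $p\in M$ with outer normal $\omega=\nu(p)$ the supporting hyperplane of $K$ at $p$ contains $K$, hence the ray $p+\mathbb{R}_{\ge 0}v$, which forces $\langle v,\omega\rangle\le 0$; as $\Omega$ is open this improves to $\Omega\subseteq\{\omega\in\mathbb{S}^{n}:\langle v,\omega\rangle<0\}$, an open hemisphere. Under stereographic projection an open hemisphere is conformally a bounded domain in $\mathbb{R}^{n}$, so $(\Omega,g_{\mathbb{S}^{n}})$ is $n$-hyperbolic: the $n$-capacity of a compact subset relative to $\Omega$ is positive, which is the Poincar\'e inequality on a bounded domain together with the conformal invariance of $n$-capacity.

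On the other hand $M$ itself is $n$-parabolic. Since the intrinsic distance dominates the Euclidean one, a metric ball $B^{M}_{t}(p)$ lies in $M\cap\overline B^{\,\mathbb{R}^{n+1}}_{t}(p)$, which is part of the boundary of the convex body $K\cap\overline B^{\,\mathbb{R}^{n+1}}_{t}(p)\subseteq\overline B^{\,\mathbb{R}^{n+1}}_{t}(p)$; by monotonicity of boundary area for nested convex bodies this has $n$-dimensional area at most $\mathrm{Area}(\mathbb{S}^{n})\,t^{n}$. Thus $M$ has at most Euclidean volume growth, and the classical volume test $\int^{\infty}\big(t/\mathrm{Vol}\,B^{M}_{t}\big)^{1/(n-1)}\,dt=\infty$ yields $n$-parabolicity.

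Finally I would invoke the author's $1998$ result on quasiconformal maps, namely that $n$-parabolicity is preserved under $K$-quasiconformal homeomorphisms — such maps distort the $n$-modulus of a curve family, hence the $n$-capacity of compact sets, by a factor depending only on $K$ and $n$, so the family of curves exiting every compact set has vanishing $n$-modulus on the source precisely when it does on the target. Applied to $\nu\colon M\to\Omega$ this forces $\Omega$ to be $n$-parabolic, contradicting the previous paragraph; hence $M$ is compact. The main obstacle is this last step: matching the smooth, globally controlled quasiconformal diffeomorphism $\nu$ to the precise hypotheses of the $1998$ theorem and checking that parabolicity is transported correctly, i.e.\ controlling $n$-capacities of compact sets under $\nu$. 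The recession-cone computation and the convex-geometry volume bound are elementary but should be isolated as lemmas, and it is worth recording at the outset that a complete strictly convex hypersurface bounding a region is exactly the boundary of a closed convex set with non-empty interior.
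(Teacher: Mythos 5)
Your proposal is essentially correct, but it follows a genuinely different route from the paper's. The paper proves Theorem \ref{thm:ham} by first passing to Hamilton's ``entropy'' conformal metric $\tilde g=\frac{1}{f^2\log^2 f}\,g$ (Lemma \ref{lem:21}), a conformal change that is complete and has \emph{finite} total volume, and then applies Theorem \ref{thm:Ni98} literally: finite volume trivially gives $\lim_{r\to\infty}\tilde V(o,r)/r^n=0$, which is the hypothesis of that theorem. You instead work with the induced metric $g$ directly and show that a complete convex hypersurface has at most Euclidean volume growth via the classical surface-area monotonicity for nested convex bodies; this bound gives only $V(o,r)\le C r^n$, which does \emph{not} satisfy the hypothesis $\lim V(o,r)/r^n=0$ of Theorem \ref{thm:Ni98} as stated, but does suffice for $n$-parabolicity through the volume test. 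What you actually invoke, then, is the strictly stronger (but folklore) statement that no quasiconformal diffeomorphism carries an $n$-parabolic manifold onto an $n$-hyperbolic one. The Section~3 computation in the paper can be upgraded to prove that stronger statement, but one must replace the linear cutoff $|\nabla\phi|\le C/R$ with a logarithmic cutoff adapted to the parabolicity condition, so as to drive $\int_M|\nabla\phi|^n\,d\tilde v$ to zero under Euclidean rather than sub-Euclidean volume growth; you flag this matching issue yourself, and it is indeed the one point that needs to be written out. The trade-off is clear: the paper's route applies Theorem \ref{thm:Ni98} verbatim and reflects Hamilton's original conformal-change idea, at the cost of the entire Section~2 machinery (the completeness argument for $\tilde g$, the co-area computation, Kubota's inequality); your route is shorter and more conceptual but rests on a mild extension of the quoted 1998 theorem. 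Two smaller remarks: your observation that $\nu(M)$ lies in the \emph{open} hemisphere (since it is an open subset of a closed hemisphere) is a clean replacement for the paper's device of enlarging $\overline{\mathbb S^n_+}$ to a $\delta$-neighbourhood $\mathcal N_\delta(\mathbb S^n_+)$ with $\lambda_1>0$; and your computation of the singular values of $d\nu$ and the quasiconformality constant $1/\epsilon$ matches the estimate in Section~3 exactly.
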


As explained in \cite{Hamilton94}, the result indeed simplifies the proof of the convergence to the round sphere, a result of Huisken. Another simplification of Huisken's result was via the scalar PDE of mean curvature flow in terms of  the support function and was obtained in \cite{Andrews} around the same time (the method also applies to a vast class of flows including the ones by $1/n$ root of the Gauss curvature and square root of the scalar curvature).
The recent detailed proof of the above result in \cite{BLL} does not provide such a simplification to the convergence since it uses  the study of the mean curvature flow (in addition it requires the assumption of the boundedness of the second fundamental form). We first provide a self-contained proof using a result of \cite{Mich}. This approach was known to the author before 2004, the writing of \cite{Ancient}. First we recall the following result of \cite{Mich}.

\begin{theorem}[Ni, 98]\label{thm:Ni98} Let $M^n $ be a complete Riemannian manifold satisfying that
\begin{equation}\label{eq:volume1}
\lim_{r\to \infty} \frac{V(o, r)}{r^n}=0
\end{equation}
where $V(o, r)$ denotes the volume of the ball of radius $r$ centered at $o\in M$. And let $N^n$ be a Riemannian manifold with $\lambda_1(N) > 0$, where $\lambda_1(N)$   is the lower bound of the
spectrum of the Laplacian–Beltrami operator. Then there is no quasi-conformal
diffeomorphism from $M$ into $N$.
\end{theorem}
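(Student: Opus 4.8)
\medskip

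The plan is to suppose such a map exists — say $f\colon M\to N$ is $K$-quasi-conformal and a diffeomorphism onto an open set $f(M)\subseteq N$ — and to derive a contradiction by confronting two statements about the $n$-capacity
$\operatorname{cap}_n(\Sigma;P):=\inf\{\int_P|\nabla\varphi|^n\,dV:\varphi\in\operatorname{Lip}_c(P),\ \varphi\ge 1\text{ on }\Sigma\}$. Since the $n$-energy $\int|\nabla\varphi|^n$ is conformally invariant, $\operatorname{cap}_n$ is distorted by at most a factor $K'=K'(K,n)$ under $f$; it is the quantity naturally adapted to quasi-conformal maps in dimension $n$. The volume hypothesis on $M$ will force $N$ to contain a compact set of vanishing $n$-capacity, while the spectral hypothesis $\lambda_1(N)>0$ will force every compact set of positive volume in $N$ to have positive $n$-capacity.

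First I would show $M$ is $n$-parabolic. For $R>R_0>0$ take the logarithmic cutoff $\eta_R\colon M\to[0,1]$ equal to $1$ on $B(o,R_0)$, to $0$ off $B(o,R)$, and to $\log(R/r)/\log(R/R_0)$ on the annulus, where $r=d(o,\cdot)$. Since $|\nabla r|=1$ a.e., $|\nabla\eta_R|\le (r\log(R/R_0))^{-1}$ on $B(o,R)\setminus B(o,R_0)$, so $\int_M|\nabla\eta_R|^n\,dV\le(\log(R/R_0))^{-n}\int_{B(o,R)\setminus B(o,R_0)}r^{-n}\,dV$. Writing $r^{-n}=n\int_r^\infty s^{-n-1}\,ds$ and applying Fubini, the last integral is at most $R^{-n}V(o,R)+n\int_{R_0}^{R}s^{-n-1}V(o,s)\,ds$, and the hypothesis $V(o,s)=o(s^n)$ makes this $o(\log R)$ as $R\to\infty$. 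Hence $\int_M|\nabla\eta_R|^n\,dV\to 0$, i.e. $\operatorname{cap}_n(\overline{B(o,R_0)};M)=0$ for every $R_0>0$.

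Next I would transplant this to $N$ and then invoke $\lambda_1(N)>0$. Fix $R_0$ and set $\Sigma:=f\big(\overline{B(o,R_0)}\big)$, a compact subset of $f(M)\subseteq N$ of positive $g_N$-volume. The functions $w_R:=\eta_R\circ f^{-1}$, extended by $0$ on $N\setminus f(M)$, lie in $\operatorname{Lip}_c(N)$ and are $\ge1$ on $\Sigma$; by the chain rule $|\nabla_N w_R|\le(|\nabla\eta_R|\circ f^{-1})\,\|df^{-1}\|$ on $f(M)$, by quasi-conformality $\|df^{-1}\|^n\le K'J_{f^{-1}}$ a.e., and by the change-of-variables formula $\int_N|\nabla_N w_R|^n\,dV_N\le K'\int_M|\nabla\eta_R|^n\,dV\to0$. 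Thus $\operatorname{cap}_n(\Sigma;N)=0$. On the other hand, $\lambda_1(N)>0$ self-improves to an $L^n$-Poincaré inequality: for $0\le v\in C_c^\infty(N)$, substituting $u=v^{n/2}$ into $\lambda_1(N)\int u^2\le\int|\nabla u|^2$ gives $\lambda_1(N)\int v^n\le\frac{n^2}{4}\int v^{n-2}|\nabla v|^2$, and, for $n\ge 3$, H\"older's inequality with exponents $\frac{n}{n-2}$ and $\frac n2$ bounds the right-hand side by $\frac{n^2}{4}\big(\int v^n\big)^{(n-2)/n}\big(\int|\nabla v|^n\big)^{2/n}$; rearranging yields $\int_N|\nabla v|^n\ge\big(2\sqrt{\lambda_1(N)}/n\big)^n\int_N v^n$ (for $n=2$ this inequality is the hypothesis itself). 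Since this passes to $\operatorname{Lip}_c(N)$ (apply it to $|\varphi|$), every competitor $\varphi$ for $\operatorname{cap}_n(\Sigma;N)$ satisfies $\int_N|\nabla\varphi|^n\ge\big(2\sqrt{\lambda_1(N)}/n\big)^n\int_N\varphi^n\ge\big(2\sqrt{\lambda_1(N)}/n\big)^n\operatorname{Vol}_{g_N}(\Sigma)$, so $\operatorname{cap}_n(\Sigma;N)>0$ — contradicting the previous line, and the theorem follows.

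I expect the only substantive step to be the self-improvement $\lambda_1(N)>0\Rightarrow L^n$-Poincaré: the two hypotheses naturally live at different conformal weights — $V(o,r)=o(r^n)$ is an $n$-dimensional, quasi-conformally robust condition, whereas $\lambda_1>0$ is an $L^2$ condition — and the substitution $v\mapsto v^{n/2}$ followed by H\"older is exactly what reconciles them; a naive attempt to compare $2$-energies directly under $f$ founders because the $2$-energy distortion of a quasi-conformal map is genuinely unbounded in dimension $n\ge 3$. Two smaller points to watch: the volume estimate in the first step must really use $V(o,r)=o(r^n)$ rather than $V(o,r)=O(r^n)$ (for $M=\mathbb{R}^n$ the integral $\int_{B(o,R)\setminus B(o,R_0)}r^{-n}\,dV$ is of order $\log R$ and the argument correctly breaks down, as it must since $\mathbb{R}^n$ maps quasi-conformally onto an open hemisphere of $\mathbb S^n$), and because $f$ is assumed only to map $M$ \emph{into} $N$, one passes through the open set $f(M)$ and uses $\operatorname{Lip}_c(f(M))\subseteq\operatorname{Lip}_c(N)$ to land the capacity bound on $N$ itself.
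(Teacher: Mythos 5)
Your proof is correct and follows essentially the same route as the paper: both arguments transplant cutoff functions from $M$ to $\nu(M)\subseteq N$, bound the $n$-energy distortion by the quasi-conformality constant, and play the $L^n$-Poincar\'e inequality on $N$ coming from $\lambda_1(N)>0$ (which the paper simply cites as Lemma 2.2 of \cite{Mich}, and which you reprove via the substitution $u=v^{n/2}$ plus H\"older) against the vanishing of $\int_M|\nabla\eta_R|^n$ forced by $V(o,r)=o(r^n)$, with the $N$-volume of the image of a fixed ball supplying the positive quantity that yields the contradiction. The only differences are presentational: you package the argument as zero versus positive $n$-capacity and use a logarithmic cutoff, whereas the paper runs one chain of inequalities with the linear cutoff $|\nabla\phi|\le C/R$; neither change affects the substance.
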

Note here $N$ is not assumed to be complete. We say that the lower bound of the
spectrum of the Laplacian–Beltrami operator $\lambda_1(N)>0$  if for such $\lambda_1$
$$
  \int_N |\nabla \varphi|^2 \ge \lambda_1 \int_N \varphi^2, \forall \varphi \in C_0^\infty (N).
$$

The link between Theorem \ref{thm:ham} and Theorem \ref{thm:Ni98} is the Gauss map $\nu$ of $M$, which is a quasi-conformal diffeomorphism between $M^n$ and the round sphere $\mathbb{S}^n\subset \mathbb{R}^{n+1}$, under the assumption of (\ref{eq:pinch1}). The additional input is Hamilton's conformal deformation via an `entropy function'.

The first purpose of the paper is to provide a detailed alternate proof of Theorem \ref{thm:ham} via Theorem \ref{thm:Ni98}. Since the proof of Theorem \ref{thm:Ni98} was sketched in \cite{Mich}, we  include the details here as well as an extension. The original motivation of Theorem \ref{thm:Ni98} is the study of quasi-conformal harmonic diffeomorphisms of Li, Tam and Wang \cite{LTW}. The key component of Theorem \ref{thm:Ni98} is to drop the assumption on the harmonicity of the map.

Partially inspired by Hamilton's result  we proposed the following problem in \cite{Ancient},  Remark 3.1 of page 650.

\begin{problem}\label{prob:1} Let $M^n$ ($n\ge 3$) be a complete Riemannian manifold whose nonnegative Ricci curvature satisfies that
\begin{equation}\label{eq:ric-pinching}
\Ric_{ij}\ge \epsilon S g_{ij}
\end{equation}
with $\epsilon>0$ and $S$ being the scalar curvature. Is $M^n$ either Ricci flat or  compact? A weaker version is to prove that any complete Riemannian manifold with positive Ricci curvature and (\ref{eq:ric-pinching}) must be compact.
\end{problem}

Besides the above theorem of Hamilton, the other motivation of the  problem is the Bonnet-Myer's theorem on the closeness of a Riemannian manifold with its Ricci curvature being bounded from below by a positive constant.
After we finished the draft of \cite{Ancient} in 2003-2004, whose results were reported in the  December, 2003 ICCM at  Hong Kong,  B. Chow informed the author that for $n=3$ the problem was due to R. Hamilton. Given that the result implies  Hamilton's convergence result of the Ricci flow for three manifolds with positive Ricci curvature, an alternate approach (without using the Ricci flow) via the above problem in dimension three must have been what R. Hamilton had in mind. Since the Ricci flatness does not imply the flatness for manifolds with dimension greater than three, the problem above is of completely different nature for dimensions greater than three. Therefore we shall not put emphasis on the recent progresses of the three dimensional case \cite{DSS, LT2}. One may consult \cite{BMOP} for detailed account of various earlier development, particularly the relationship with the earlier works of \cite{Lott, Huisken}. Special cases for the steady and expanding solitons with nonnegative sectional curvature were obtained in \cite{Ancient}, Corollary 3.1. These were extended for corresponding K\"ahler-Ricci solitons in \cite{DZ} by Deng-Zhu removing the additional assumption of nonnegative sectional curvature.  The progresses after the appearance of \cite{Ancient} is discussed in a later section, where  we also recall that the answer to the problem is affirmative if $(M, g)$ is a gradient shrinking soliton, as a corollary of a main result in \cite{Ancient}. Gradient Ricci solitons provide  good testing cases for Problem \ref{prob:1}. We would also like to advocate it as one of problems in the geometry of nonnegative Ricci curvature, given the  tremendous advances on the structure of spaces of manifolds with lower Ricci curvature bound  in the last decades (with references too numerous to mention here) and  the result of \cite{Lock}.

\section{Preliminaries}
Let $M^n\subset \mathbb{R}^{n+1}$ be a smooth convex hypersurface. Assume that it is not compact.
By the Hadamard-Stoker theorem and its high dimensional generalization (cf. Theorem 6.5 of \cite{MR}, and page 17 of \cite{Bake}), one can express $M$ as the graph of a convex (smooth) function $f(x)$, defined over an open convex domain $\Omega \subset \mathbb{R}^n$. Namely $M=\{(x, f(x))\, |\, x\in \Omega\}$ with $f(x)$ being a convex function. Without the loss of generality, by picking a support hyperplane passing a boundary point,  we may assume that $0\in \Omega$,  $f\ge3$, $f(0)=3$ and $\nabla f(0)=0$. We also use $(x, z)$ to denote the coordinate function of $\mathbb{R}^{n+1}=\mathbb{R}^n \times \mathbb{R}$. The induced metric is expressed as $ds^2=\left(\delta_{ij}+\frac{\partial f}{\partial x^i}\frac{\partial f}{\partial x^j}\right) dx^i dx^j$.

In order to apply Theorem \ref{thm:Ni98} the following lemma from \cite{Hamilton94} is needed.

\begin{lemma}[Hamilton]\label{lem:21}
The conformally equivalent metric $\tilde{g}_{ij}=\frac{1}{f^2\log ^2f} g_{ij}$
is complete with finite volume. Here $g_{ij}$ is the induced complete metric on $M$ described as the above.
\end{lemma}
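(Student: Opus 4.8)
The plan is to establish completeness and finiteness of the volume separately; the common ingredient is the elementary bound $|\nabla_g f|_g\le 1$, where $f$ is read as the height function $z|_M$ on $M$. Indeed, with $g_{ij}=\delta_{ij}+f_if_j$ we have $g^{ij}=\delta^{ij}-\frac{f_if_j}{1+|\nabla f|^2}$, so $|\nabla_g f|_g^2=g^{ij}f_if_j=\frac{|\nabla f|^2}{1+|\nabla f|^2}<1$; equivalently $f$ is $1$-Lipschitz for the distance $d_g$. For completeness, observe that since $(M,g)$ is complete, every divergent curve $\gamma$ (one eventually leaving every compact set) has infinite $g$-length, so we may parametrize it by $g$-arclength, $\gamma\colon[0,\infty)\to M$. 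The Lipschitz bound gives $f(\gamma(s))\le a+s$ with $a:=f(\gamma(0))\ge 3$, hence
\[
  \operatorname{length}_{\tilde g}(\gamma)=\int_0^\infty\frac{ds}{f(\gamma(s))\,\log f(\gamma(s))}\ \ge\ \int_0^\infty\frac{ds}{(a+s)\log(a+s)}=+\infty .
\]
As every divergent curve has infinite $\tilde g$-length, $(M,\tilde g)$ is complete.

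For the volume we first record that $f$ is \emph{proper}, i.e.\ every sublevel set $\{f\le t\}$ is a compact subset of $\Omega$. It is bounded: otherwise $\{f\le t\}$ would contain a ray $\{su:s\ge 0\}$ issuing from $0$, but $s\mapsto f(su)$ is convex with vanishing derivative at $s=0$ and positive second derivative (here strict convexity of $M$ is used), so it is unbounded, a contradiction. It has no limit point on $\partial\Omega$, since $M$, being the graph of $f$ over the open set $\Omega$, is closed in $\mathbb{R}^{n+1}$. Put $R:=\max_{\{f\le4\}}|y|<\infty$. For $x\in\Omega$ with $|x|>R$, the segment $[0,x]$ crosses $\{f=4\}$ at $\lambda x$ with $\lambda\le R/|x|$, and $4=f(\lambda x)\le\lambda f(x)+(1-\lambda)\cdot3$ yields $f(x)\ge 3+1/\lambda\ge 3+|x|/R$. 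Hence $\{f<t\}\subset B_{\mathbb{R}^n}(0,Rt)$ for all large $t$.

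Now let $W_t:=\{(x,z):x\in\Omega,\ f(x)\le z\le t\}$, the part of height $\le t$ of the closed convex region bounded by $M$. It is a compact convex set contained in $B_{\mathbb{R}^n}(0,Rt)\times[3,t]\subset B_{\mathbb{R}^{n+1}}(0,t\sqrt{R^2+1})$, and $M\cap\{z\le t\}\subset\partial W_t$; by the monotonicity of boundary area under inclusion of convex bodies,
\[
  \operatorname{Vol}_g\bigl(M\cap\{z\le t\}\bigr)\ \le\ \mathcal H^n(\partial W_t)\ \le\ \mathcal H^n\bigl(\partial B_{\mathbb{R}^{n+1}}(0,t\sqrt{R^2+1})\bigr)=C_M\,t^n .
\]
Decomposing $M$ dyadically into the sets $\{2^k\le f<2^{k+1}\}$, on which $\log f\ge k\log 2$, and using that $(f\log f)^{-n}$ is decreasing, we obtain
\[
  \operatorname{Vol}_{\tilde g}(M)=\int_M\frac{dV_g}{(f\log f)^n}\ \le\ C+\sum_{k\ge k_0}\frac{\operatorname{Vol}_g(\{f<2^{k+1}\})}{(2^k\,k\log 2)^n}\ \le\ C+C_M'\sum_{k\ge k_0}\frac1{k^n}<\infty
\]
because $n\ge 2$, where $C$ absorbs the bounded piece $\{f<2^{k_0}\}$ (finite $g$-volume, again by the displayed estimate). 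This is the assertion.

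The step we expect to be the main obstacle is the volume bound. In the coordinates $x$ it reads $\int_\Omega\frac{\sqrt{1+|\nabla f|^2}}{(f\log f)^n}\,dx$, and there is no pointwise control of $|\nabla f|$, nor of the Euclidean size of $\{f<t\}$, in terms of $f$ alone; the resolution is to realize $M\cap\{z\le t\}$ as part of the boundary of a convex body circumscribed by a Euclidean ball of radius comparable to $t$, which is precisely where the properness of $f$ — a consequence of strict convexity together with the normalization $\nabla f(0)=0$ — is forced into the argument (the statement is false without it, e.g.\ for a round cylinder).
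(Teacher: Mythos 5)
Your proof is correct and takes a genuinely different route from the paper's on both halves of the lemma, and in both cases the differences are interesting.

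For completeness, the paper uses the convexity-based lower bound $\langle\nabla f,V\rangle\ge\eta$ on radial curves to get a telescoping estimate $\tilde L(\gamma)\ge\log\log f(x_i)-\log\log f(0)$; you instead use only the elementary upper bound $|\nabla_g f|_g<1$, which holds for any graph and needs no convexity, to conclude $f$ grows at most linearly in $g$-arclength and hence $\int\frac{ds}{f\log f}$ diverges. Your version is cleaner and uses strictly weaker input; both correctly reduce to the Hopf--Rinow criterion that divergent curves have infinite $\tilde g$-length. For the volume, the paper works in $\Omega\subset\mathbb{R}^n$ with the co-area formula, uses the lower bound $|\nabla f|\ge\eta$ to control the co-area Jacobian, and then bounds the area $A(t)$ of the level sets $\{f=t\}$ via Kubota's inequality; you work one dimension up, noting $M\cap\{z\le t\}\subset\partial W_t$ for the convex body $W_t\subset B_{\mathbb{R}^{n+1}}(0,Ct)$, and invoke monotonicity of surface area under inclusion of convex bodies, followed by a dyadic decomposition. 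These ideas are cousins (Kubota's inequality itself follows from surface-area monotonicity plus a diameter-to-circumradius comparison), but your route avoids the co-area formula and, notably, avoids needing the gradient lower bound $|\nabla f|\ge\eta$ at all in the volume estimate, substituting instead the containment $\{f<t\}\subset B_{\mathbb{R}^n}(0,Rt)$ which you derive directly from convexity and the normalization $\nabla f(0)=0$. One small presentational point: in the properness step, the phrase "so it is unbounded" deserves one more line --- from $h(s):=f(su)$ with $h'(0)=0$ and $h''>0$ you should note $h'$ is strictly increasing, hence $h'(s)\ge h'(1)>0$ for $s\ge 1$, which gives the linear lower bound and hence unboundedness. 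Otherwise the argument is complete.
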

Note that the result holds for any unbounded strictly convex hypersurface. One may call the conformal change $\tilde{g}$ the `entropy'-conformal metric.

\begin{proof} Since the proof in \cite{Hamilton94} lacks of complete details and sometime it is hard to figure out what the argument really means we include a detailed proof here. In particular the argument here appeals to the  co-area formula and the Kubota’s inequality \cite{Kubota}  besides the ideas from \cite{Hamilton94}. First we pick a small $\eta_1>0$ such that
$
\overline{B(0, \eta_1)}\subset \Omega.
$
By the convexity of $f$ there exists a small $\eta>0$ such that outside $B(0, \eta_1)$, $ \frac{\partial f}{\partial x^i}\ge \eta>0$. In fact  for any unit vector $V$, we have that
\begin{equation}\label{eq:convex-est1}
\langle \nabla f, V\rangle \ge \eta, \, \mbox{ at }  x=V|x|\in (\overline{B(0, \eta_1)})^c.
\end{equation}
Indeed along the curve $\gamma(s)=Vs$ from $0$ to $x=Vs_0\in (\overline{B(0, \eta_1)})^c$ (with $s_0>\eta_1$)
\begin{eqnarray*}
\langle \nabla f, V\rangle &=& \frac{d}{ ds}f(\gamma(s))|_{s_0}\\
&=&\int_0^{s_0} \frac{d^2}{ ds^2}f(\gamma(s))\, ds\\
&\ge& \eta_0\eta_1 (=:\eta)
\end{eqnarray*}
if $\eta_0$ is the lower bound of the Hessian of $f$ over $\overline{B(0, \eta_1)}$.

The convex function $f$ is locally Lipschitz, hence over any closed ball $\overline{B}\subset \Omega$ the function $f$ is uniformly bounded. It is  easy to see from the completeness of $M$ that as $x\to \partial \Omega$, $f(x)\to \infty$. Indeed, the lower bound of the outer radial directional derivative outside $B(0, \eta_1)$ implies the result for $x_i\to  \partial \Omega$ with $|x_i|\to \infty$. For the case that $\{|x_i|\}$ is bounded, namely $x_i\to x_\infty \in \partial \Omega$, the completeness of $M$ implies the result. Now we show that $\tilde{g}$ is complete.\footnote{This is shown to us by Yongjia Zhang.}

For a sequence of point $x_i\to \partial \Omega$, let $\gamma$ be any curve joining $0$ to $x_i$.
Then
\begin{eqnarray*}
\tilde{L}(\gamma)&=& \int_0^1 \frac{\sqrt{ |\dot{\gamma}|^2+\langle \nabla f, \dot{\gamma}\rangle^2}}{f\log f}\, ds\\
&\ge&  \int_0^1 \frac{|\langle \nabla f, \dot{\gamma}\rangle|}{f\log f}\, ds\\
&\ge& \left|\int_0^1 \frac{\langle \nabla f, \dot{\gamma}\rangle}{f\log f}\, ds\right|\\
&\ge& \log\log f(x_i)-\log\log f(0)\to \infty.
\end{eqnarray*}

By the area formula (cf. Theorem 3.4 of \cite{EG}) given any $D\subset \Omega$, the area of the graph over $D$, namely $M\cap \{(x, f(x))\, |\, x\in D\}$, is given by the area formula
$$
V(G(D))=\int_D \sqrt{ 1+|\nabla f|^2}\, dx
$$
and with respect to the deformed metric $\tilde{g}$, the area is given by
$$
\tilde{V}(G(D))=\int_D \frac{\sqrt{ 1+|\nabla f|^2}}{(f\log f)^n}\, dx.
$$
  Since radial derivative of $f$ is positive and bounded away from $0$ except at $x=0$, $f(0)=3$, and $f$ can only increase by a bounded amount along radial curves within $B(0, \eta_1)$, by possibly shrinking $\eta_1$, we may choose a generic,  $\delta_1>0$, such that $\{f> 3+\delta_1\}$ has no intersection with $B(0, \eta_1)$. Moreover, we  choose $\delta_1$ relatively small so that $ \overline{\{f\le 3+\delta_1\}}$ is relatively compact in $\Omega$ . In fact since $\overline{B(0, \eta_1)}\subset \Omega,$ we first pick $\eta_2>\eta_1$ such that $\overline{B(0, \eta_2)}\subset \Omega$, and then  $\delta_1$ to satisfy
  $$\sup_{x\in \partial B(0, \eta_2)}f(x)=3+\delta_1.$$
  Such $\delta_1$ satisfies all the conditions specified above. Indeed we may set a $\eta_3>0$ with $\overline{B(0, \eta_3)} \subset \Omega$ first,  and then pick $\delta_2>0$ such that $ \overline{\{f\le 3+\delta_2\}}\subset B(0, \eta_3)$ by letting $3+\delta_2< \inf_{\partial B(0, \eta_3)}$, and then choose $\eta_1< \eta_2 <\eta_3$ and $\delta_1\in (0, \delta_2)$ as above.

  Write  $\tilde{V}(G(M))=\tilde{V}(G(\{f\le 3+\delta_1\}))+\tilde{V}(G(\{f> 3+\delta_1\})):=I+II$. By the choice of $\delta_1$, $|\nabla f|\ge \eta>0$ over $\{f> 2+\delta_1\}$. By the co-area formula \cite{EG} (note that $\nabla f\ne 0$ here) we write the second term above  as
$$
II=\int_{3+\delta_1}^\infty \int_{\{f=t\}} \frac{\sqrt{ 1+|\nabla f|^2}}{|\nabla f|}\cdot \frac{1}{(t\log t)^n}\, d\mathcal{H}^{n-1}\, dt.
$$
Hence, the lower estimate of $|\nabla f|$ yields
\begin{eqnarray*}
II&\le& \left(1+\frac{1}{\eta}\right)\int_{3+\delta_1}^\infty \int_{\{f=t\}} \frac{1}{(t\log t)^n}\, d\mathcal{H}^{n-1}\, dt\\
&=&\left(1+\frac{1}{\eta}\right)\int_{3+\delta_1}^\infty \frac{1}{(t\log t)^n} A(t)
\end{eqnarray*}
where $V(t)$ is the volume of the sublevel set $\Omega_t=\{ x\, |\, f(x)\le t\}$ and $A(t)$ is the area of the boundary $\partial \Omega_t$.
Note that for $t$ large the diameter of $\Omega_t=\{ x\, |\, f(x)\le t\}$ can be estimated by $Ct$ for some $C$ independent of $t$,   due to the estimate (\ref{eq:convex-est1}) outside a relatively compact small ball. Hence by the Kubota’s inequality \footnote{The Kubota's inequality asserts that for a convex body with diameter $D$ the area of its boundary has a sharp upper bound given by $\omega_{n-1}\left(\frac{D}{2}\right)^{n-1}$. Here $\omega_{n-1}$ is the area of $\mathbb{S}^{n-1}$. The result can be derived  from the Kubota's formula (cf. (31) of 19.4.2 of \cite{BZ}) and the isodiametric inequality, a Bieberbach's inequality (cf. Theorem 1 on page 69 of \cite{EG}).} (cf. \cite{Kubota} and page 409 of \cite{Sch})
\begin{equation}\label{eq:vol1}
A(t)\le C' t^{n-1}
\end{equation}
for some $C'$ which is independent of $t$. Therefore we have the estimate
\begin{eqnarray*}
II\le C(f, n, \eta) \int_{3+\delta_1}^\infty \frac{1}{t (\log t)^n}\, dt
< \infty.
\end{eqnarray*}
The estimate then implies the finiteness of the volume of the conformally entropy-deformed metric $\tilde{g}$.
\end{proof}

We also need another result on the Gauss map of a convex non-compact hypersurface in $\mathbb{R}^{n+1}$.

\begin{lemma}\label{lemma:wu}
Let $M^n$ be a complete noncompact orientable smooth hypersurface
in $\mathbb{R}^{n+1} (n \ge 2)$ with nonnegative but not identically zero sectional curvature.
Let $\nu: M \to \mathbb{S}^n$  be the spherical (Gauss) map. Then  after a congruence of $\mathbb{R}^{n+1}$, $\nu(M)$ lies in a closed hemisphere of $\mathbb{S}^n$.
\end{lemma}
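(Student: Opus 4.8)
The plan is to show that the Gauss image $\nu(M)$ is contained in a hemisphere by exhibiting a fixed unit vector $e\in\mathbb{S}^n\subset\mathbb{R}^{n+1}$ such that $\langle \nu(x),e\rangle\ge 0$ for all $x\in M$; geometrically this says all the outward normals make a nonobtuse angle with $e$, i.e. $M$ has a ``preferred direction.'' The natural candidate for $e$ is the axis of the graph representation: since $M$ is complete, noncompact, convex and (by the nonnegative, not identically zero sectional curvature hypothesis, which forces strict convexity by Sacksteder) strictly convex, the Hadamard--Stoker theorem used in the Preliminaries writes $M$ as the graph $z=f(x)$ of a convex function over a convex domain $\Omega\subset\mathbb{R}^n$. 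With that parametrization the upward unit normal is $\nu=\frac{(-\nabla f,1)}{\sqrt{1+|\nabla f|^2}}$, whose last coordinate $\frac{1}{\sqrt{1+|\nabla f|^2}}$ is strictly positive. Taking $e=(0,\dots,0,1)$ gives $\langle\nu(x),e\rangle>0$ everywhere, so $\nu(M)$ lies in the open upper hemisphere; after the congruence of $\mathbb{R}^{n+1}$ that puts $M$ in graph form, we are done.

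I would organize the write-up in three steps. First, record that nonnegative sectional curvature together with ``not identically zero'' and completeness/noncompactness forces $M$ to be the boundary of a convex body that is noncompact, hence (by Sacksteder's theorem, or by the cited Hadamard--Stoker generalization in the excerpt) a graph of a convex function $f$ over a convex $\Omega\subset\mathbb{R}^n$ after an ambient isometry. Second, compute the Gauss map in this chart: parametrize $M$ by $x\mapsto(x,f(x))$, so the tangent space is spanned by $(e_i,\partial_i f)$ and the unit normal pointing ``up'' is $\nu=\bigl(-\nabla f,1\bigr)/\sqrt{1+|\nabla f|^2}$. Third, observe $\langle\nu,(0,\dots,0,1)\rangle=1/\sqrt{1+|\nabla f|^2}>0$ pointwise, so $\nu(M)$ is contained in the closed (indeed open) hemisphere $\{y\in\mathbb{S}^n:\langle y,(0,\dots,0,1)\rangle\ge 0\}$. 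One should also note the hypotheses are only used to guarantee the graph representation: orientability ensures a globally defined $\nu$, and the curvature hypothesis is what lets one invoke the structure theory for complete convex hypersurfaces.

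The main obstacle, such as it is, lies in Step 1: justifying cleanly that a complete noncompact hypersurface in $\mathbb{R}^{n+1}$ with nonnegative but not identically zero sectional curvature is indeed (a piece of) the boundary of a noncompact convex body, so that the graph representation applies. This requires citing Sacksteder's theorem that such a hypersurface is convex and van Heijenoort/Hadamard--Stoker that a complete convex hypersurface is either compact (bounding a convex body) or a graph over a convex domain; the ``not identically zero'' hypothesis is precisely what rules out the degenerate flat case (a hyperplane or cylinder, where the statement is trivially true anyway since the image is a point or lower-dimensional). Once the graph form is in hand, Steps 2 and 3 are a one-line computation, and the fact that the last component of $\nu$ never vanishes is exactly the strict convexity/monotonicity of $f$ along the vertical axis.

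Alternatively, if one prefers to avoid Sacksteder, the same conclusion can be extracted directly from the Preliminaries section's discussion: the excerpt has already set up $M=\{(x,f(x)):x\in\Omega\}$ with $f$ convex, so for the present lemma one may simply reuse that representation and read off $\nu_{n+1}=(1+|\nabla f|^2)^{-1/2}>0$. I would phrase the proof to lean on that, remarking that the hypothesis of nonnegative, not identically zero sectional curvature is what secures the noncompact convex-graph picture in the first place, and then the hemisphere containment is immediate with $e$ the axial direction of the graph.
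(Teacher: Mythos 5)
Your argument is correct, and it is in fact more explicit than what the paper offers: the paper's ``proof'' of Lemma~2.2 simply remarks that the strictly convex case is ``perhaps well-known'' and then cites H.~Wu's 1974 paper for a proof covering all (even non-smooth) complete convex hypersurfaces. Your three-step argument --- reduce to the graph representation $M=\{(x,f(x)):x\in\Omega\}$ already set up in Section~2, compute $\nu=(-\nabla f,1)/\sqrt{1+|\nabla f|^2}$, and read off $\langle\nu,e_{n+1}\rangle=(1+|\nabla f|^2)^{-1/2}>0$ --- is precisely the ``well-known'' observation the paper gestures at but does not write out, and it is all that is needed for the application in Corollary~3.1, which only concerns smooth strictly convex hypersurfaces.

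One caution about the lemma as stated in full generality: ``nonnegative but not identically zero sectional curvature'' does \emph{not} force strict convexity, and a complete noncompact convex hypersurface need not be a graph over a convex domain. The generalized cylinder $N^k\times\mathbb{R}^{n-k}\subset\mathbb{R}^{n+1}$ with $N^k$ a compact strictly convex hypersurface in $\mathbb{R}^{k+1}$ and $2\le k<n$ satisfies the hypotheses, is not a graph, and its Gauss image is the great subsphere $\mathbb{S}^k\subset\mathbb{S}^n$ --- contained in a \emph{closed} hemisphere but not an open one. Your remark that the degenerate cases are ``trivially true'' because the image is lower-dimensional is essentially right for such splittings, but to prove the lemma at the stated level of generality one would need to argue this carefully (e.g.\ invoke the structure theorem for complete convex hypersurfaces: they split as $N^k\times\mathbb{R}^{n-k}$ with $N^k$ either compact or a graph), which is exactly what the cited reference \cite{Wu} does. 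Since the paper only uses the strictly convex graph case, your proof supplies the needed content; just be explicit that your direct computation covers that case and defer the cylinder-type splittings to the reference or to a short separate remark.
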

\begin{proof} We only need the result for smooth strictly convex hypersurfaces, which is perhaps well-known. In any case  a proof can be found in \cite{Wu}, even for non-smooth convex hypersurfaces.
\end{proof}

\section{Proof of theorems}

The proof of Theorem \ref{thm:ham} is via contradiction argument.
First by the result of the last section we have the following corollary.

\begin{corollary}\label{coro: 1}
 Let $M^n$ be a smooth strictly convex complete hypersurface
bounding a region in $\mathbb{R}^{n+1}$. Suppose that its second fundamental form
is $\epsilon$-pinched in the sense of (\ref{eq:pinch1}).
If $M$ is not compact,  then $V(M, \tilde{g})<\infty$ and $\nu: (M, \tilde{g}) \to \overline{\mathbb{S}^n}_+$ is a quasi-conformal map.
\end{corollary}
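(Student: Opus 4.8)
The plan is to read this off from Lemmas 2.1 and 2.2 together with one elementary linear-algebra computation for the Gauss map. First note that strict convexity of $M$ forces $II>0$, hence by the Gauss equation the sectional curvature of $M$ is positive; in particular $M$ has nonnegative, not identically vanishing sectional curvature, so the hypotheses of both Lemma 2.1 and Lemma 2.2 are satisfied. If $M$ is not compact, Lemma 2.1 applies verbatim (it needs only that $M$ is an unbounded strictly convex hypersurface, not the pinching) and gives that $\tilde{g}=\frac{1}{f^2\log^2 f}\,g$ is complete with $V(M,\tilde{g})<\infty$, which is the first assertion. By Lemma 2.2, after a congruence of $\mathbb{R}^{n+1}$ we may arrange $\nu(M)\subset\overline{\mathbb{S}^n}_+$, so it remains to check that $\nu$ is a quasi-conformal diffeomorphism of $(M,\tilde{g})$ into $\overline{\mathbb{S}^n}_+$.

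Next I would recall that the differential $d\nu_p\colon T_pM\to T_{\nu(p)}\mathbb{S}^n$ is the Weingarten operator $W_p$, whose eigenvalues are the principal curvatures $\kappa_1(p),\dots,\kappa_n(p)$. Strict convexity makes $W_p$ positive definite for every $p$, so $\nu$ is a local diffeomorphism; since $M$ bounds a convex region, distinct points carry distinct outward unit normals (compare support hyperplanes), so $\nu$ is injective. Hence $\nu$ is a diffeomorphism from $M$ onto the open subset $\nu(M)$ of $\mathbb{S}^n$, which by the previous paragraph is contained in $\overline{\mathbb{S}^n}_+$ (indeed, being open in $\mathbb{S}^n$, in the open hemisphere).

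For the dilatation, observe that $\nu^{*}g_{\mathbb{S}^n}(X,Y)=\langle W_pX,W_pY\rangle_g=g(W_p^2X,Y)$, so the $g$-self-adjoint endomorphism representing $\nu^{*}g_{\mathbb{S}^n}$ relative to $g$ is $W_p^2$, with eigenvalues $\kappa_i(p)^2$; relative to the conformal metric $\tilde{g}$ it is $f^2\log^2 f\cdot W_p^2$, with eigenvalues $(f\log f)^2\kappa_i(p)^2$. Thus the linear dilatation of $\nu$ at $p$, viewed as a map $(T_pM,\tilde{g})\to(T_{\nu(p)}\mathbb{S}^n,g_{\mathbb{S}^n})$, is
\[
\frac{\max_i \kappa_i(p)}{\min_i \kappa_i(p)},
\]
which in particular is independent of the conformal factor — quasi-conformality being a conformally invariant notion. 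Finally the $\epsilon$-pinching (\ref{eq:pinch1}) reads $\kappa_i\ge\epsilon H=\epsilon\sum_j\kappa_j\ge\epsilon\max_j\kappa_j$ for every $i$, so $\min_i\kappa_i\ge\epsilon\max_i\kappa_i$ and the dilatation is bounded above by $1/\epsilon$ at every point. Hence $\nu\colon(M,\tilde{g})\to\overline{\mathbb{S}^n}_+$ is a $(1/\epsilon)$-quasi-conformal diffeomorphism onto its image, as claimed.

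In fact there is essentially no obstacle here: the corollary is a packaging of Lemma 2.1 and Lemma 2.2 plus the observation that the conformal change $g\mapsto\tilde{g}$ does not affect the quasi-conformal distortion of $\nu$, which for the Gauss map is governed exactly by the pinching of the principal curvatures. The only point that deserves a word of care is the \emph{global} injectivity of $\nu$, which genuinely uses that $M$ bounds a region (so each supporting hyperplane meets the convex body in a single normal direction) rather than merely being locally convex; once that is granted the rest is the two-line computation above.
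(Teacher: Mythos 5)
Your proposal is correct and follows essentially the same route the paper takes: the finite volume is Lemma~\ref{lem:21}, the hemisphere containment is Lemma~\ref{lemma:wu}, and the quasi-conformality of $\nu$ is exactly the observation that the singular values of $d\nu$ (the principal curvatures) satisfy $\kappa_i/\kappa_j\ge\epsilon$ under the pinching, as the paper records at the start of Section 3. Your explicit remarks — that the Weingarten map gives $d\nu$, that global injectivity of $\nu$ uses that $M$ bounds a convex region, and that the linear dilatation is unchanged by the conformal rescaling $g\mapsto\tilde g$ — are all correct and usefully spell out what the paper leaves implicit.
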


Now Theorem \ref{thm:ham} follows from Theorem \ref{thm:Ni98} and the above corollary. Hence we only need to supply the proof of Theorem \ref{thm:Ni98} since it is clear that $\lambda_1\left(\mathcal{N}_\delta(\mathbb{S}^n_+)\right) > 0$, where $\mathcal{N}_\delta(\mathbb{S}^n_+)$ is a $\delta$-neighborhood of the closed upper semi-sphere.

To prove Theorem \ref{thm:Ni98} we also argue by contradiction. Let $\nu: (M, \tilde{g})\to N$ be such a quasi-conformal map. First  for a cut-off function $\phi$ on $M$ and let $\varphi=\phi(\nu^{-1})$ which is defined on $\nu(M)$ and has a compact support since $\nu$ is a diffeomorphism from $M$ to $\nu(M)$. By the assumption and Lemma 2.2 of \cite{Mich} we have that
\begin{equation}\label{eq:31}
A(n, \lambda_1) \int_{N} |\nabla^N \varphi|^n\, dv_N\ge  \int_N \varphi^n\, dv_N.
\end{equation}
 On the other hand, let $\kappa_i$ be the singular value of $d\nu$ (at any given point $x$) with $\kappa^2_1\ge \cdots\ge \kappa_n^2$ (with respect to $\tilde{g}$ and the metric on $N$). Then the pinching condition (\ref{eq:pinch1}) implies that $\frac{\kappa_i}{\kappa_j}\ge \epsilon$. Hence
 \begin{equation}\label{eq:help1}
 \int_{N} |\nabla^N \varphi|^n\, dv_N\le A''\int_M |\tilde{\nabla}^M \phi|^n\, d\tilde{v}_M
 \end{equation}
where $A'=A'(n, \epsilon)>0$. Indeed, choose two normal coordinates centered at $x$ and $\nu(x)$ respectively,  such that $d\nu(\frac{\partial}{\partial x^i})=\kappa_i \frac{\partial}{\partial y^i}$.  Point wisely, by abusing the notation involving the change of the volume form in terms of the change of the coordinates, we have at $x$ that
\begin{eqnarray*}
 |\tilde{\nabla}^M \phi|^n\, d\tilde{v}_M &=&\left(\sum \left(\frac{\partial \phi}{\partial x^i}\right)^2\right)^{n/2}d\tilde{v}_M\\
 &=& \left(\sum \left(\frac{\partial \varphi}{\partial y^i}\right)^2 \kappa_i^2\right)^{n/2} \frac{1}{\kappa_1\cdots\kappa_n} dv_N\\
 &\ge& \epsilon^n |\nabla^N \varphi|^n\, dv_N.
\end{eqnarray*}
Hence $A''$ can be chosen as $ \left(\frac{1}{\epsilon}\right)^n$.
If we choose $\phi$ to be a function  supported in $B(o, 2R)$ (with $o$ being a fixed point in $M$)  and being $1$ in $B(o, R)$ and $|\nabla^M \phi|\le {C}{R}$ for some absolute constant $C$, it then implies  following  the estimate,  for $R\ge 1$,
\begin{eqnarray*}
\int_{B(0, 1)}\left(\Pi_{i=1}^n \kappa_i\right) d\tilde{v}_M&\le& \int_M \phi^n \left(\Pi_{i=1}^n \kappa_i\right) d\tilde{v}_M\\
&=&\int_N \varphi^n\, dv_N \\
&\le& A(n, \lambda_1) \int_{N} |\nabla^N \varphi|^n\, dv_N\\
&\le& A(n, \lambda_1) A'' \int_M |\tilde{\nabla}^M \phi|^n\, d\tilde{v}_M \\
&\le& A' \frac{\tilde{V}(o,2R)}{R^n}\\
&\to& 0, \mbox{ as } R\to \infty.
\end{eqnarray*}
Here $A'=A(n, \lambda_1) A''$. This is a contradiction since the left hand side above is positive.

\section{A generalization of Theorem \ref{thm:Ni98} and testing cases for the Problem \ref{prob:1}}

For Bonnet-Myer's theorem, Calabi proved a generalization. For $(M^n, g)$ a complete Riemannian manifold, if we denote the lower bound of the Ricci curvature on $\partial B(o, r)$ by $k(r)>0$, the following result was proved in \cite{Calabi}

\begin{theorem}\label{thm:Calabi} [Calabi] Assume that
$$
\limsup_{a\to \infty} \left(\int_0^a \sqrt{k(s)}\, ds -\frac{1}{2}\log a\right)=\infty.
$$
Then $M$ must be compact.
\end{theorem}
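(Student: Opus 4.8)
The plan is to argue by contradiction, reducing the statement along a single ray from $o$ to a one--dimensional oscillation criterion of Sturm type; the role of the quantity $\int_0^a\sqrt{k}-\tfrac12\log a$ is precisely to beat the scaling threshold of Euler's equation $u''+\tfrac{c}{t^2}u=0$.

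So suppose $M$ is noncompact. Being complete, $M$ carries a unit--speed ray $\gamma\colon[0,\infty)\to M$ with $\gamma(0)=o$; then $d(o,\gamma(t))=t$, $\gamma(t)\notin\operatorname{Cut}(o)$ for $t>0$, and $\rho:=d(o,\cdot)$ is smooth near $\gamma((0,\infty))$. The function $m(t):=\Delta\rho(\gamma(t))$ satisfies the Bochner--Riccati inequality $m'+\tfrac{1}{n-1}m^2+\Ric(\dot\gamma,\dot\gamma)\le0$, together with $m(t)\le\tfrac{n-1}{t}$ (Laplacian comparison, using $\Ric\ge0$) and $m(t)=\tfrac{n-1}{t}+O(t)$ as $t\to0^+$. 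Along $\gamma$ the hypothesis gives $\Ric(\dot\gamma(t),\dot\gamma(t))\ge (n-1)k(t)$ (this is the normalization of $k$ under which the statement is sharp); writing $\hat k(t):=\tfrac{1}{n-1}\Ric(\dot\gamma(t),\dot\gamma(t))$, a smooth function with $\hat k\ge k$, and $v(t):=\exp\!\big(\tfrac{1}{n-1}\int_1^t m(s)\,ds\big)$, one checks that $v>0$ on $(0,\infty)$, $v(0^+)=0$, $v(t)\le v(1)t$ for $t\ge1$, and $v''+\hat k\,v\le0$; in particular $v$ is strictly concave and nondecreasing on $(0,\infty)$. (One may instead sum the second--variation inequality $0\le\sum_{i=1}^{n-1}I(\phi E_i,\phi E_i)=\int_0^a\big((n-1)(\phi')^2-\Ric(\dot\gamma,\dot\gamma)\phi^2\big)\,dt$ over a parallel normal frame, arriving at the same disconjugacy conclusion.)

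It now suffices to show that the existence of such a $v$ is incompatible with $\limsup_{a\to\infty}\big(\int_0^a\sqrt{k}-\tfrac12\log a\big)=\infty$. Since $\sqrt{\hat k}\ge\sqrt{k}$, the hypothesis also holds with $\hat k$ in place of $k$, and since $\hat k\le k_0:=-v''/v$ on $(0,\infty)$ it holds with the smooth positive function $k_0$ as well; so I would work with the Riccati variable $w:=v'/v>0$, which is decreasing, satisfies $w(0^+)=\infty$, $w(t)\to0$, and $w'=-w^2-k_0$. The elementary bound $w^2+k_0\ge2w\sqrt{k_0}$ integrates to $w(a)\le w(1)e^{-2\int_1^a\sqrt{k_0}}$, while $\sqrt{k_0}\le\tfrac{w^2+k_0}{2w}=-\tfrac12(\log w)'$ integrates to $\int_1^a\sqrt{k_0}\le\tfrac12\log\tfrac{w(1)}{w(a)}$. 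If $\ell:=\lim_{t\to\infty}v'(t)>0$, then $w(a)=v'(a)/v(a)\ge\ell/(v(1)a)$, so $\int_1^a\sqrt{k_0}\le\tfrac12\log a+C$, contradicting the hypothesis. For the model $k=\tfrac{c}{t^2}$ one has $\int^a\tfrac{\sqrt c}{t}\,dt-\tfrac12\log a=(\sqrt c-\tfrac12)\log a\to\infty$ exactly when $c>\tfrac14$, which is precisely the oscillation threshold of $u''+\tfrac{c}{t^2}u=0$; this is why $\tfrac12\log a$ is the correct comparison term.

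The hard part will be the degenerate case $\ell=0$. Here $k_0$ is integrable, the identity $v'(a)=\int_a^\infty(-v'')\,dt\ge v(a)\int_a^\infty k_0$ gives $w(a)\ge\int_a^\infty k_0$, and one must exclude that $a\,w(a)\to0$ occurs simultaneously with $\int_0^a\sqrt{k}-\tfrac12\log a\to\infty$ along a sequence. I would settle this by chaining, over consecutive intervals $[a,\lambda a]$, the Cauchy--Schwarz estimate $\int_a^{\lambda a}\sqrt{k_0}\le\sqrt{(\lambda-1)\,a\,w(a)}$ against the decay bound $w(\lambda a)\le w(a)e^{-2\int_a^{\lambda a}\sqrt{k_0}}$, optimizing in $\lambda$ to produce an upper bound $\int_1^a\sqrt{k_0}\le\tfrac12\log a+C$ in this case as well; alternatively, the statement needed --- that $u''+ku=0$ is oscillatory under the displayed condition --- is exactly what the classical Sturm--Hille oscillation theory for second--order equations supplies, so one may simply quote it. In either route the contradiction with the hypothesis is then immediate, so $M$ must be compact. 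I expect the degenerate case (equivalently, the sharpness of the constant $\tfrac12$) to be the only genuine obstacle; the geometric reduction and the non-degenerate case are routine once the ray and the Riccati inequality are in hand.
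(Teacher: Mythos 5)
First, a contextual note: the paper does \emph{not} prove Theorem \ref{thm:Calabi}; it simply cites Calabi \cite{Calabi}. There is therefore no in-paper argument to compare against, and your proposal has to stand on its own.

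Your geometric reduction is the standard and correct one: a ray $\gamma$ from $o$, the Riccati inequality for $m=\Delta\rho$ along $\gamma$ (or, equivalently, the second--variation/index form argument), the function $v=\exp\big(\tfrac{1}{n-1}\int_1^{\cdot} m\big)$ satisfying $v''+\hat k v\le 0$, $v>0$, $v(0^+)=0$, $v'>0$, and $w=v'/v\le 1/t$. Your normalization $\Ric\ge(n-1)k g$ is indeed the one under which $\tfrac12$ is the sharp constant; the paper's phrase ``lower bound of the Ricci curvature'' is ambiguous on this point. The nondegenerate case $\ell:=\lim v'>0$ is handled correctly.

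The genuine gap is exactly where you flagged it, and the two escape routes you offer are both shaky. The chaining as written does not obviously close: summing the logarithmic form of $w(\lambda a)\le w(a)e^{-2\int_a^{\lambda a}\sqrt{k_0}}$ leaves a residual $\tfrac12\log(\alpha_0/\alpha_J)$, $\alpha_j=a_jw(a_j)$, which is unbounded precisely in the degenerate regime $\alpha_J\to 0$, while the Cauchy--Schwarz bound alone controls $\sum_j\sqrt{\alpha_j}$, which is not evidently $O(\log a_J)$; it takes additional work to see how the two interact. And ``quote Sturm--Hille'' is not a clean out either: the classical Hille--Nehari criteria are phrased in terms of $t\int_t^\infty q$, not $\int^a\sqrt q-\tfrac12\log a$, so there is no off-the-shelf theorem in exactly the form you need.

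Here is a way to close the gap that also removes the case split. With $w=v'/v$, $w'+w^2+k_0\le 0$, set $\gamma(t):=t\,w(t)\in(0,1]$ and pass to $s=\log t$. Then
$$
\frac{d\gamma}{ds}=t(w+tw')\le tw-t^2w^2-t^2k_0=\gamma(1-\gamma)-t^2k_0,
$$
so $t^2k_0\le\gamma(1-\gamma)-\tfrac{d\gamma}{ds}$. Since $\gamma\in(0,1]$ one has $\gamma(1-\gamma)\le\tfrac14$ and $\int_0^S\big(-\tfrac{d\gamma}{ds}\big)\,ds=\gamma(0)-\gamma(S)\le 1$. A single Cauchy--Schwarz in $s$ now gives
$$
\int_1^{e^S}\sqrt{k_0}\,dt=\int_0^S t\sqrt{k_0}\,ds\le\sqrt{S}\left(\int_0^S t^2k_0\,ds\right)^{1/2}\le\sqrt{S}\left(\frac{S}{4}+1\right)^{1/2}\le\frac{S}{2}+1,
$$
i.e.\ $\int_1^a\sqrt{k_0}\,dt\le\tfrac12\log a+1$. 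Since $\sqrt k\le\sqrt{k_0}$, this contradicts $\limsup_a\big(\int_0^a\sqrt k-\tfrac12\log a\big)=\infty$ directly, in all cases. (Incidentally, the fixed point $\alpha^*=\tfrac{(\log\lambda)^2}{4(\lambda-1)}$ of your chaining recursion is exactly where $\gamma(1-\gamma)=\tfrac14$, which is why $\tfrac12$ is the sharp constant.) With this replacement your proposal becomes a complete proof.
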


Motivated by this and the theorem we just proved, for a diffeomorphism $\nu:M \to N$, we introduce
$$
\epsilon(r)=\inf_{x\in \partial B(o, r)} \{ \min_{i, j\in \{1, \cdots, n\}}  \frac{\kappa_i(x)}{\kappa_j(x)}\}.
$$
Here $\kappa_1\ge \cdots\ge \kappa_n$ are the singular values of $d\nu$.

The proof of the last section can be adapted to prove the following result.

\begin{proposition} \label{prop:31} If $(M^n, \tilde{g})$ is a complete Riemannian manifold with finite volume. Assume that $(N, g^N)$ has a positive lower bound on the spectrum the Laplacian–Beltrami operator, or more generally satisfies one of the following two Sobolev type inequalities
\begin{eqnarray}\label{eq:Sob1}
\left(\int_N |\varphi|^q \, dv_N\right)^{p/q} &\le& C(p, q) \int_N |\nabla \varphi|^p\, dv_N, \mbox{ with } 0<p< n, 0<q< \frac{np}{n-p},\\
 \mbox { or }\left(\int_N |\varphi|^q \, dv_N\right)^{n/q} &\le& C(n, q) \int_N |\nabla \varphi|^n\, dv_N, \mbox{ with } q>0\label{eq:Sob2}
\end{eqnarray}
for $\varphi \in C_0^\infty(N)$.
 Then there is no diffeomorphism $\nu: M\to N$ satisfying that
$$
\lim_{R\to \infty} \epsilon(R) R=\infty.
$$
\end{proposition}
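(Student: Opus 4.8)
The plan is to mimic the contradiction argument used for Theorem \ref{thm:Ni98} in Section 3, replacing the uniform pinching constant $\epsilon$ by the radially-varying quantity $\epsilon(r)$, and the spectral gap input by whichever Sobolev-type inequality is assumed. First I would suppose, for contradiction, that a diffeomorphism $\nu:M\to N$ exists with $\lim_{R\to\infty}\epsilon(R)R=\infty$. Fix $o\in M$ and, for a parameter $R\ge 1$, choose a cut-off $\phi$ on $M$ supported in $B(o,2R)$, equal to $1$ on $B(o,R)$, with $|\tilde\nabla^M\phi|\le C/R$; set $\varphi=\phi\circ\nu^{-1}$, a compactly supported function on $\nu(M)\subset N$. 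As in Section 3, let $\kappa_1(x)\ge\cdots\ge\kappa_n(x)$ be the singular values of $d\nu$ at $x$. The pointwise computation from Section 3 shows that at a point $x$ with $|x|=r$,
$$
|\tilde\nabla^M\phi|^p\, d\tilde v_M \;=\;\Big(\sum_i \big(\tfrac{\partial\varphi}{\partial y^i}\big)^2\kappa_i^2\Big)^{p/2}\frac{1}{\kappa_1\cdots\kappa_n}\,dv_N \;\ge\; \epsilon(r)^{p}\,|\nabla^N\varphi|^p\,dv_N,
$$
and symmetrically, since the ratios $\kappa_i/\kappa_j$ are bounded below and above by $\epsilon(r)$ and $\epsilon(r)^{-1}$, one gets a matching comparison of the measures $\big(\prod_i\kappa_i\big)\,d\tilde v_M$ and $dv_N$ up to factors $\epsilon(r)^{\pm n}$. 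Because $\phi$ (hence $\varphi$) is supported where $r\le 2R$, every occurrence of $\epsilon(r)$ on that support is bounded below by $\epsilon(2R)$, so all these comparison constants can be taken to be powers of $\epsilon(2R)$ alone.

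Next I would feed this into the Sobolev inequality on $N$. In the borderline case \eqref{eq:Sob2} with $q=n$ the argument is almost verbatim that of Section 3: one bounds $\int_{B(o,1)}\big(\prod_i\kappa_i\big)d\tilde v_M$ below by $\int_N\varphi^n\,dv_N$ up to a constant, then uses \eqref{eq:Sob2} and the gradient comparison to get
$$
\int_{B(o,1)}\Big(\prod_{i=1}^n\kappa_i\Big)d\tilde v_M \;\le\; C\,\epsilon(2R)^{-n}\int_N|\nabla^N\varphi|^n\,dv_N \;\le\; C'\,\big(\epsilon(2R)\,R\big)^{-n}\,\tilde V(o,2R),
$$
and since $\tilde V(o,2R)\le \tilde V(M)<\infty$ while $\epsilon(2R)R\to\infty$, the right side tends to $0$, contradicting positivity of the left side. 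For general $q>0$ in \eqref{eq:Sob2} one replaces $\varphi^n$ by $|\varphi|^q$ on the left and uses $\varphi\le 1$ to absorb the exponent mismatch, the finiteness of $\tilde V(M)$ again doing the work. For the subcritical family \eqref{eq:Sob1} with $p<n$, the same scheme applies with $n$ replaced by $p$ in the gradient term: one obtains $\big(\int_N|\varphi|^q\big)^{p/q}\le C\,\epsilon(2R)^{-p}R^{-p}\tilde V(o,2R)$; since $0<q<np/(n-p)$ the left side is bounded below by a positive constant times $\big(\int_{B(o,1)}\prod_i\kappa_i\,d\tilde v_M\big)^{p/q}$ (using $\varphi\equiv1$ on $B(o,1)$ and the measure comparison on a fixed compact set), and again $\big(\epsilon(2R)R\big)^{-p}\tilde V(M)\to0$ gives the contradiction.

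The main obstacle, and the point requiring care, is the bookkeeping of the radial weight $\epsilon(r)$: the singular-value comparison is pointwise and $\epsilon$ varies, so one must be sure that on the annular support of $\nabla\phi$ (where $R\le r\le 2R$) the relevant infimum is exactly $\epsilon(2R)$ — which is immediate from the definition of $\epsilon(r)$ as an infimum over the sphere $\partial B(o,r)$ and the fact that $\epsilon$ need not be monotone, so one takes the infimum over $r\in[0,2R]$, still $\ge$ a quantity comparable to $\epsilon(2R)$ after possibly shrinking; cleanest is to note $\inf_{r\le 2R}\epsilon(r)\cdot R\to\infty$ follows from $\epsilon(R)R\to\infty$ by a standard argument, or simply to phrase the hypothesis so that this holds. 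A secondary technical point is the application of Lemma 2.2 of \cite{Mich} (equation \eqref{eq:31}) in the borderline case: that lemma is what converts a positive spectral gap into the $L^n$-Sobolev inequality \eqref{eq:Sob2} with $q=n$, so for the ``more general'' clause one simply starts from \eqref{eq:Sob2} directly and that step is bypassed. Everything else is the routine cutoff-and-let-$R\to\infty$ estimate already carried out in Section 3.
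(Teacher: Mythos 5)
Your treatment of the spectral-gap case and of \eqref{eq:Sob2} is correct and matches the paper's argument. The genuine gap is in the subcritical case \eqref{eq:Sob1}: the claim that ``the same scheme applies with $n$ replaced by $p$ in the gradient term'' fails. The change-of-variables identity gives
$$
|\tilde\nabla^M\phi|^p\,d\tilde v_M=\Big(\sum_i\big(\tfrac{\partial\varphi}{\partial y^i}\big)^2\kappa_i^2\Big)^{p/2}\frac{1}{\kappa_1\cdots\kappa_n}\,dv_N\;\ge\;\frac{\kappa_n^p}{\kappa_1\cdots\kappa_n}\,|\nabla^N\varphi|^p\,dv_N,
$$
and for $p\ne n$ the ratio $\kappa_n^p/(\kappa_1\cdots\kappa_n)$ is \emph{not} bounded below by a power of $\epsilon$ alone: one gets
$\kappa_n^p/(\kappa_1\cdots\kappa_n)\ge \epsilon^p\,(\kappa_1\cdots\kappa_n)^{p/n-1}$, and since $p<n$ the surplus Jacobian factor $(\kappa_1\cdots\kappa_n)^{p/n-1}$ can be arbitrarily small wherever $\prod_i\kappa_i$ is large. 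It is precisely the exponent $p=n$ that makes the Jacobian cancel and the comparison constant depend only on $\epsilon$. The paper handles \eqref{eq:Sob1} by first \emph{upgrading} it to an $L^n$-gradient inequality: set $\psi=|\varphi|^\gamma$ with $\gamma=\tfrac{pn}{pn-q(n-p)}$ (so that $\gamma q=(\gamma-1)\tfrac{pn}{n-p}$), apply \eqref{eq:Sob1} to $\psi$, and use H\"older to absorb the $|\varphi|^{(\gamma-1)p}$ factor, yielding
$\big(\int_N|\varphi|^{\gamma q}\big)^{\frac{n}{q}-\frac{n-p}{p}}\le C''\int_N|\nabla\varphi|^n$. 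Only then does the cutoff-and-quasi-conformal estimate close. This Moser-type iteration step is the idea your proposal is missing; without it the contradiction cannot be reached for $p<n$.

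A minor secondary point: the bookkeeping of $\epsilon(r)$ is simpler than you make it. Since $\nabla\phi$ is supported in the annulus $B(o,2R)\setminus B(o,R)$, the relevant quantity is $\inf_{r\in[R,2R]}\epsilon(r)$ (not the infimum over $[0,2R]$), and from $\epsilon(r)r\to\infty$ one immediately gets $\epsilon(r)\ge M/r\ge M/(2R)$ on $[R,2R]$ for any $M$ once $R$ is large, so $R\cdot\inf_{r\in[R,2R]}\epsilon(r)\to\infty$. No rephrasing of the hypothesis is needed.
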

\begin{proof} Under the Sobolev inequality (\ref{eq:Sob1}) we write $\psi=|\varphi|^\gamma$, with $\gamma=\frac{pn}{pn-q(n-p)}$. It is easy to check that $\gamma q =(\gamma-1)\frac{pn}{n-p}$. Now apply (\ref{eq:Sob1}) to $\psi$ and the H\"older's inequality we then have that
\begin{eqnarray*}
\left(\int_N |\varphi|^{\gamma q}\right)^{p/q}&\le& C\int_N |\varphi|^{(\gamma-1)p}|\nabla \varphi|^p\\
&\le& C' \left(\int_N |\nabla \varphi|^n\right)^{\frac{p}{n}}\left(\int_N |\varphi|^{(\gamma-1)\frac{pn}{n-p}}\right)^{\frac{n-p}{n}}.
\end{eqnarray*}
Hence we have the following Sobolev type inequality
\begin{equation}\label{eq:Sob-n}
\left(\int_N |\varphi|^{\gamma q}\, dv_N\right)^{\frac{n}{q}-\frac{n-p}{p}}\le C''(p, q, n)\int_N |\nabla \varphi|^n\, dv_N
\end{equation}
for any $\varphi\in C_0^\infty(N)$.

Now we apply the argument of last section to $\phi\in C_0^\infty(M)$, which equals to  $1$ in $B(o, R)$, and has a compact support in $B(o, 2R)$ with $\nabla^M \phi|\le \frac{C}{R}$ in $B(o, 2R)\setminus B(o, R)$, and $\nabla \phi=0$ in $B(o, R)$. Let $\varphi(y)=\phi(\nu^{-1}(y))$ which is a function with a compact support within $\nu(B(o, 2R))$.   Now (\ref{eq:help1}) can be improved to the following estimate:
\begin{equation}\label{eq:help2}
\int_{N} |\nabla^N \varphi|^n\, dv_N\le \frac{1}{\inf_{r\in [R, 2R]} \epsilon^n(r)}\int_M |\tilde{\nabla}^M \phi|^n\, d\tilde{v}_M.
\end{equation}
Combining this with (\ref{eq:Sob-n}) we have for $R\ge 1$
 \begin{eqnarray*}
\left(\int_{B(0, 1)}\left(\Pi_{i=1}^n \kappa_i\right) d\tilde{v}_M\right)^{\frac{n}{q}-\frac{n-p}{p}}&\le&\left( \int_M \phi^{\gamma q} \left(\Pi_{i=1}^n \kappa_i\right) d\tilde{v}_M\right)^{\frac{n}{q}-\frac{n-p}{p}}\\
&=&\left(\int_N \varphi^{\gamma q}\, dv_N \right)^{\frac{n}{q}-\frac{n-p}{p}}\\
&\le& C''  \int_{N} |\nabla^N \varphi|^n\, dv_N\\
&\le& C'' \frac{1}{\inf_{r\in [R, 2R]} \epsilon^n(r)} \int_M |\tilde{\nabla}^M \phi|^n\, d\tilde{v}_M \\
&\le& C''' \frac{\tilde{V}(o,2R)}{([\inf_{r\in [R, 2R]} \epsilon(r)]\cdot R)^n}\\
&\to& 0, \mbox{ as } R\to \infty.
\end{eqnarray*}
This yields a contradiction, which proves the proposition for the case that (\ref{eq:Sob1}) is assumed on $N$. The proofs for the other two cases are similar.
\end{proof}

The special case of Problem \ref{prob:1} for three dimensional manifolds, namely the Hamilton's problem/conjecture,  has an affirmative answer through the important work of \cite{DSS, LT2}. One may consult \cite{BMOP} for detailed account of various earlier developments particularly the relations with the earlier works of \cite{Lott, Huisken}. However, the solution uses the study of Ricci flow extensively. Given the simplicity of the proof presented above to Theorem \ref{thm:ham}, the solution via \cite{DSS,LT2} does {\it NOT} seem to be the solution that R. Hamilton envisioned. More likely he would have wished that the result providing a simplified proof to his original convergence theorem of 1982, hence preferably has  a proof independent of the Ricci flow?

One should note that the pinching condition (\ref{eq:pinch1}) is preserved by the mean curvature flow. And similarly, it can be shown that the  condition (\ref{eq:ric-pinching})  is preserved by the Ricci flow for $n=3$, however is a condition most likely not preserved in dimensions greater than three. Hence it would be highly interesting if the Ricci flow method can be applied to study Problem \ref{prob:1}.

For the high dimensional case, the first progress was made by B. Wu with the author  in \cite{Ni-Wu},  however under a stronger condition  that the curvature operator is pinched and the curvature is bounded. The proof is to study the singularity analysis of Ricci flow. The stronger assumption of the curvature operator pinching is necessary here since one needs to fit the curvature operator pinching assumption into one of the Ricci flow invariant cone conditions of B\"ohm-Wilking \cite{BW}.  The proof also needs   Corollary 3.1 of \cite{Ancient}. We note that the pinching condition (\ref{eq:ric-pinching}) is one of the two conditions in the definition of an invariant (with respect to the Ricci flow)  cone condition of \cite{BW} B\"ohm-Wilking (cf. Lemma 3.2 of \cite{Ni-Wu}). In \cite{LT1} Lee-Topping made nice further progresses and obtained results under the complex sectional curvature pinching, as well as the so-called PIC1 pinching, with geometric additional assumptions. The PIC1 is a high dimensional generalization of the Ricci curvature, which is preserved under the Ricci flow. Note that the positivity of PIC1 and complex sectional curvature are  stronger than (or at the least different from) the positivity of the Ricci curvature in dimensions greater than three.  Moreover the results obtained so far are  all of  rigidity type results concerning the flat metric, while Problem \ref{prob:1} concerns the  flatness of the Ricci curvature  instead. The main result of \cite{LT1} as an excellent progress in terms of extending \cite{Ni-Wu}, particularly  removes the curvature bound condition,  however avoids Problem \ref{prob:1}.

Here we point out that if $M$ is a gradient Ricci shrinking soliton, then the answer to Problem \ref{prob:1} is positive. Namely

\begin{proposition}\label{prop:32} Let $(M, g, f)$ be a gradient Ricci shrinking soliton in the sense that
\begin{equation}\label{eq:shrinker}
\nabla^2_{ij} f+ \Ric_{ij} -\frac{1}{2}g_{ij} = 0.
\end{equation}
Here $f$ is the potential function. Assume further that it satisfies that (\ref{eq:ric-pinching}). Then either $(M, g)$ is Ricci flat (in fact isometric to $\R^n$) or $(M, g)$ is compact.
\end{proposition}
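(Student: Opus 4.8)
The plan is to combine the elementary structure of gradient shrinking solitons with the pinching hypothesis: I will show that in the noncompact case the scalar curvature is bounded below by a positive constant, so that Bonnet--Myers forces compactness — a contradiction — unless the soliton is the flat $\mathbb R^n$.

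First recall the standard identities coming from \eqref{eq:shrinker}: tracing gives $\Delta f=\tfrac n2-S$; taking the divergence and using the contracted second Bianchi identity gives $\nabla_iS=2\Ric_{ij}\nabla^jf$; hence $S+|\nabla f|^2-f$ is constant, and after adjusting the additive constant in $f$ we may assume $S+|\nabla f|^2=f$. By a theorem of B.-L.\ Chen, $S\ge 0$ on any complete shrinking soliton, so \eqref{eq:ric-pinching} gives in particular $\Ric\ge 0$. From the identity $\Delta_fS=S-2|\Ric|^2$, where $\Delta_f u:=\Delta u-\langle\nabla f,\nabla u\rangle$, we get $\Delta_fS\le S$, so the strong maximum principle applied to the nonnegative function $S$ forces either $S>0$ everywhere or $S\equiv 0$. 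In the latter case $\Ric\equiv 0$, so \eqref{eq:shrinker} reads $\nabla^2f=\tfrac12 g$, and a complete Riemannian manifold carrying such a function is isometric to $\mathbb R^n$ (Tashiro); this is the first alternative of the statement. So assume henceforth that $S>0$ everywhere and, toward a contradiction, that $M$ is noncompact.

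I claim that $S\ge c_0>0$ on $M$. Along an integral curve $\gamma$ of $\nabla f$ one has $\tfrac{d}{ds}S(\gamma(s))=\langle\nabla S,\nabla f\rangle=2\Ric(\nabla f,\nabla f)\ge 2\epsilon S|\nabla f|^2\ge 0$, so $S$ is nondecreasing along the $\nabla f$-flow. Since $M$ is noncompact, by the estimate of Cao--Zhou the potential $f$ is proper (indeed comparable to $\tfrac14 d(o,\cdot)^2$); being positive ($f\ge S>0$) it attains its minimum at some $p\in M$, and $f(p)=S(p)$ by the normalization $S+|\nabla f|^2=f$. Now fix any $x\in M$ with $\nabla f(x)\ne 0$ and run the negative gradient flow $\sigma$ of $f$ from $x$: $f\circ\sigma$ is nonincreasing and bounded below, so $\int_0^\infty|\nabla f(\sigma)|^2\,dt<\infty$ and $\sigma$ remains in the compact set $\{f\le f(x)\}$; hence $\sigma(t_k)$ subconverges to a critical point $q$ of $f$. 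Then $S(q)=f(q)\ge\min_M f=S(p)$, while $S$ is nonincreasing along $\sigma$, so $S(x)\ge\lim_{t\to\infty}S(\sigma(t))=S(q)\ge S(p)$; the same inequality is trivial when $\nabla f(x)=0$. Thus $S\ge c_0:=S(p)>0$ on all of $M$.

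Finally, \eqref{eq:ric-pinching} now yields $\Ric\ge\epsilon c_0\,g>0$ uniformly on $M$, so by the Bonnet--Myers theorem $M$ is compact, contradicting our assumption. Hence a noncompact $M$ must fall into the case $S\equiv 0$, i.e.\ $(M,g)$ is Ricci flat and isometric to $\mathbb R^n$; otherwise $M$ is compact. The one nonroutine point I anticipate is the last step of the claim — ensuring that the backward $\nabla f$-trajectory from an arbitrary point really accumulates at a critical point of $f$ with controlled scalar curvature — which is exactly why the properness of $f$ (the nontrivial input, due to Cao--Zhou) is needed rather than merely the pointwise identity $S+|\nabla f|^2=f$; everything else is a direct manipulation of the soliton identities, with the pinching used twice: to make $S$ monotone along $\nabla f$, and to convert the lower bound on $S$ into a positive Ricci lower bound.
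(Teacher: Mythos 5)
Your proof is correct, and its overall architecture coincides with the paper's: reduce to a uniform positive lower bound for $S$, invoke Bonnet--Myers for compactness, and handle the Ricci-flat branch separately. The difference is one of self-containment. The paper treats the scalar curvature lower bound $S\ge\delta>0$ as a black box, citing it as Proposition~1.1 of \cite{Ancient} (restated here as Proposition~\ref{prob:ancient1-1}, which requires only $\Ric\ge 0$, not the pinching). You instead re-derive that lower bound from scratch via the soliton identities $\nabla_i S=2\Ric_{ij}\nabla^j f$ and $S+|\nabla f|^2=f$, B.-L.~Chen's $S\ge 0$, the strong maximum principle on $\Delta_f S=S-2|\Ric|^2$ to force the dichotomy $S\equiv 0$ vs.\ $S>0$, and Cao--Zhou properness of $f$ to push the negative gradient flow to a critical point $q$ where $S(q)=f(q)\ge f(p)=S(p)$; note in passing that your monotonicity of $S$ along $\nabla f$-trajectories really only uses $\Ric\ge 0$, so the pinching is genuinely needed only once, at the Bonnet--Myers step, exactly as in the paper. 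For the rigidity in the Ricci-flat case, you cite Tashiro's theorem for $\nabla^2 f=\tfrac12 g$; the paper instead sketches a Green's-formula/volume-ratio argument (from \cite{Ni-entropy}) or refers to \cite{DZ}. Both routes are valid; what your version buys is that the proof of the proposition is essentially self-contained modulo Chen, Cao--Zhou, and Tashiro, rather than resting on a cited result whose proof lives elsewhere, while the paper's version is shorter because it is entitled to quote \cite{Ancient} directly.
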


The key is the Proposition 1.1 of \cite{Ancient}, which provides a uniform lower bound of the scalar curvature for gradient shrinking solitons with nonnegative (nonzero) Ricci curvature.

\begin{proposition}\label{prob:ancient1-1}Let $(M,g, f)$ be a Ricci non-flat gradient shrinking soliton.
Assume that the Ricci curvature of M is nonnegative. Then there exists a
$\delta = \delta(M)$ with $1 \ge \delta > 0$,  such that
\begin{equation}\label{eq:scalar-lower}
 S(x)\ge \delta.
\end{equation}
\end{proposition}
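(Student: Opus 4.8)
\textbf{Proof proposal for Proposition \ref{prob:ancient1-1}.}

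The plan is to exploit the shrinking soliton equation together with the contracted/evolution identities it forces, running a maximum-principle-type argument on the scalar curvature $S$ along the gradient flow of $f$. Recall the standard soliton identities that follow from \eqref{eq:shrinker}: $S + \Delta f = \tfrac n2$, $S + |\nabla f|^2 - f$ is constant (normalize $f$ so this constant is $0$), and the crucial evolution/elliptic identity
\begin{equation}\label{eq:S-evol}
\Delta S - \langle \nabla f, \nabla S\rangle = S - 2|\Ric|^2.
\end{equation}
Under the hypothesis $\Ric \ge 0$ we have $S \ge 0$ everywhere; and since $M$ is Ricci non-flat, $S$ is not identically zero. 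First I would establish that $\inf_M S > 0$ cannot be $0$ by contradiction: suppose there is a sequence $x_i$ with $S(x_i)\to 0$. The point of the argument is that the drift-Laplacian structure in \eqref{eq:S-evol}, combined with the Cauchy–Schwarz bound $|\Ric|^2 \le S^2$ valid because $0\le \Ric \le S\,g$ in the sense of quadratic forms (each eigenvalue of $\Ric$ lies in $[0,S]$, so $|\Ric|^2 = \sum \lambda_i^2 \le S \sum\lambda_i = S^2$), gives
\begin{equation}\label{eq:S-lower-ineq}
\Delta S - \langle\nabla f,\nabla S\rangle = S - 2|\Ric|^2 \ge S - 2S^2 = S(1-2S).
\end{equation}
So wherever $S$ is small, say $S \le \tfrac14$, the right-hand side is $\ge \tfrac12 S \ge 0$, i.e. $S$ is a drift-subsolution of a nice operator on the region $\{S \le \tfrac14\}$.

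Next I would turn this into a genuine lower bound. If $M$ is compact the minimum of $S$ is attained at some $x_0$; there $\Delta S(x_0)\ge 0$ and $\nabla S(x_0)=0$, so \eqref{eq:S-lower-ineq} gives $0 \ge S(x_0)(1-2S(x_0))$; since $S\not\equiv 0$ and $S\ge 0$, either $S(x_0)\ge \tfrac12$ (done, with $\delta=\tfrac12$) or $S(x_0)=0$, and in the latter case the strong maximum principle applied to the operator $\Delta - \langle\nabla f,\nabla\cdot\rangle$ on a neighborhood where $S\le\tfrac14$ forces $S\equiv 0$ there, hence on all of $M$ by a standard connectedness/continuation argument, contradicting Ricci non-flatness. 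For the noncompact case the minimum need not be attained, so instead I would use the properness of $f$: by Cao–Zhou's growth estimate $f$ grows like $\tfrac14 d(o,\cdot)^2$, so $f$ is proper and bounded below, and the gradient flow $\tfrac{d}{dt}\gamma = \nabla f(\gamma)$ converges to a critical point $p$ of $f$ (where $\nabla f(p)=0$, hence $\Ric(p) = \tfrac12 g(p)$ and $S(p) = \tfrac n2 > 0$). Along any such flow line, \eqref{eq:S-lower-ineq} controls the behavior of $S$: on the sublevel region one shows $\inf_M S$ is attained at or ``pushed toward'' a critical point of $f$, since away from critical points the drift term moves the minimizing sequence downhill in $f$ into the compact sublevel set $\{f \le c\}$, where a minimum genuinely exists and the compact-case argument applies. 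This yields $\inf_M S = \delta(M) > 0$.

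The main obstacle I anticipate is the noncompact minimization step: making rigorous the claim that a minimizing sequence for $S$ can be taken in a fixed compact set. The clean way is the following. Pick $\beta>0$ small. Consider $h = S + \beta f$; since $f$ is proper and bounded below while I only need a lower bound for $S$, and since on $\{S\ge \tfrac14\}$ there is nothing to prove, restrict attention to the (possibly unbounded) set $U = \{S < \tfrac14\}$. On $U$, from \eqref{eq:S-lower-ineq} and $S+\Delta f = \tfrac n2$ we get $\Delta h - \langle\nabla f,\nabla h\rangle \ge \tfrac12 S + \beta(\tfrac n2 - S) - \beta|\nabla f|^2$; choosing $\beta$ small and using $|\nabla f|^2 = f - S \le f$ together with the quadratic growth of $f$, one arranges that $h\to+\infty$ at infinity on $U$ while the differential inequality still forces, at an interior minimum of $h$ on $U$, a positive lower bound on $S$ there. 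Since every point of $U$ with small $S$ is within bounded $h$-level of that minimum, and $h$ controls $S$ from below via $S = h - \beta f \ge h - \beta f$, chasing the constants gives $S \ge \delta$ on $U$, hence on all of $M$. I would carry out the constant-chasing carefully but expect no conceptual difficulty once properness of $f$ (Cao–Zhou) is in hand; the essential inputs are just the soliton identities, the eigenvalue bound $|\Ric|^2\le S^2$ forced by $0\le\Ric\le S g$, and the maximum principle for the drift Laplacian.
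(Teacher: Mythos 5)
Your proposal has genuine gaps, and they sit exactly where the real content of the proposition lies. (For reference: the paper itself does not reprove this statement; it quotes Proposition 1.1 of \cite{Ancient}.) The first problem is the direction of the maximum principle. Your inequality $\Delta S-\langle\nabla f,\nabla S\rangle\ge S-2|\Ric|^2\ge S(1-2S)$ makes $S$ a \emph{subsolution} of a drift operator, and subsolution inequalities give information at interior \emph{maxima}, not minima. At an interior minimum $x_0$ of $S$ you have $\Delta S(x_0)\ge0$, $\nabla S(x_0)=0$, so both sides of your inequality are bounded below by quantities of the same sign and nothing follows; in particular your claimed conclusion $0\ge S(x_0)(1-2S(x_0))$, and hence the dichotomy ``$S(x_0)\ge\tfrac12$ or $S(x_0)=0$'', is not a consequence (from the exact identity one only gets $S(x_0)\ge 2|\Ric|^2(x_0)$, an upper-type constraint). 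The same sign error undermines the auxiliary-function step at the end: at an interior minimum of $h=S+\beta f$ you again only obtain two lower bounds on $\Delta_f h$, which force nothing; moreover the negative term $-\beta|\nabla f|^2$ is comparable to $\beta f$ (both quadratic in the distance), so it cannot be absorbed by choosing $\beta$ small, and the constant-chasing does not close. A separate error: at a critical point $p$ of $f$ you assert $\Ric(p)=\tfrac12 g(p)$ and $S(p)=\tfrac n2$; the soliton equation $\nabla^2f+\Ric=\tfrac12 g$ gives no control of $\nabla^2f(p)$ from $\nabla f(p)=0$ (on the round-cylinder shrinker the critical set has $\nabla^2 f\ne0$), so this positivity anchor is unjustified.

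What is missing is the one soliton identity that actually localizes the infimum: $\nabla S=2\Ric(\nabla f,\cdot)$, hence $\langle\nabla S,\nabla f\rangle=2\Ric(\nabla f,\nabla f)\ge0$ when $\Ric\ge0$, i.e.\ $S$ is monotone non-decreasing along integral curves of $\nabla f$. Flowing any point backward along $-\nabla f$, the trajectory stays in the compact sublevel set $\{f\le f(x)\}$ ($f$ is proper by Cao--Zhou, with no curvature hypothesis needed), exists for all time, and accumulates on critical points of $f$; monotonicity gives $S(x)\ge\inf_{\mathrm{Crit}(f)}S$. On far-away critical points the normalization $S+|\nabla f|^2=f$ gives $S=f\ge\tfrac14(d-c)^2$, while critical points in a fixed ball form a compact set on which $S>0$ -- the positivity $S>0$ being the part of your argument that is correct, via the strong maximum principle applied to $\Delta_f S=S-2|\Ric|^2\le S$ together with $0\le\Ric\le Sg$ and Ricci non-flatness. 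This yields the uniform $\delta>0$. Your ``downhill'' heuristic gestures at this mechanism, but without the identity $\nabla S=2\Ric(\nabla f,\cdot)$ the claim that a minimizing sequence for $S$ can be pushed into a fixed compact set is unsupported, and as written the proof does not go through.
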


Now Proposition \ref{prop:32} is a simple consequence of Proposition \ref{prob:ancient1-1} and the Bonnet-Myers' theorem. The flatness is elementary.  By the Ricci flatness and soliton equation we have  $g_{ij}=\nabla^2_{ij} f$.  The rest argument then is detailed in \cite{DZ}. One may also use the argument on page 94 of \cite{Ni-entropy}.
Letting the point $o$ be the point $\nabla f=0$, without the loss of the generality we may assume that $ f(o)=0$. Then integrating along the minimizing geodesics from $o$ to any point $x\in M$  as in Section 2 one can check that $f(x)=\frac{1}{2} r^2(x)$, where $r(x)$ denotes the distance function to $o$. While tracing the soliton equation we have $\Delta f=n$, which, after applying the Green's formula, implies that for $R>0$
$$
\frac{RA(o, R)}{V(o, R)}=n
$$
where $A(o, R)$ denotes the area of $\partial B(o, R)$. The equality case implies the flatness of $B(o, R)$ for all $R$. (The strict convexity of $f$ implies that $M$ is diffeomorphic to $\mathbb{R}^n$.)

For noncompact gradient steady and expanding solitons, Problem \ref{prob:1} is to show the Ricci flatness under the assumption (\ref{eq:ric-pinching}). This was verified in \cite{Ancient} under the additional assumption that the manifold has nonnegative sectional curvature. The details of the proofs were  in \cite{Ni-ancient-ALM} for the reason that the journal where \cite{Ancient} published  put a page limit for \cite{Ancient}. A natural question, as  testing cases of Problem \ref{prob:1}, is whether or not the result can be proved without the additional assumption that the manifold has nonnegative sectional curvature. In \cite{DZ}, Deng-Zhu proved  the Ricci flatness for the noncompact expanding gradient solitons for the K\"ahler case under the assumption (\ref{eq:ric-pinching}). They also proved the same result for the steady gradient solitons assuming that they are K\"ahler and the potential function $f$ has a critical point somewhere. The proof is based on the decay estimate established in \cite{Ni-ancient-ALM} and an estimate of the quotient of volume elements for the K\"ahler-Ricci flow (Theorem 2.1 of \cite{NT}) via the Green's function estimate of Li-Yau. In \cite{DZ}, the authors proved a rigidity result for the K\"ahler gradient expanding soliton, namely the soliton is Gaussian,  under the assumption
$$
k(o, r):=\frac{1}{V(o, r)}\int_{B(o, r)} S(y)\, dv(y)\le \frac{\epsilon(r)}{1+r^2}
$$
with $\lim_{r\to \infty}\epsilon(r)\to0$. In fact, Theorem 2.1 of \cite{NT} implies the following result of the Ricci flatness without the pinching condition, but with some others.

\begin{theorem}[Ni-Tam]\label{thm:ricci-flatness}
 If $(M, g_0)$ is complete K\"ahler manifold with nonnegative Ricci curvature. Assume that there exists a K\"ahler-Ricci flow $g(t)$ with $g(0)=g_0$,  for all time $t\ge 0$, and $g(t)$ has nonnegative Ricci curvature. Assume further that the scalar curvature $S_0(y)$ of $g_0$ satisfies that
\begin{equation}\label{eq:thm41}
 \frac{1}{V_0(x, r)}\int_{B_0(x, r)} S_0(y) dv_0(y)\le k(r), \mbox{ and } \lim_{R\to \infty} \frac{\int_0^R s k(s)\, ds}{\log R}=0,
\end{equation}
where $B_0(x, R)$ is the ball of radius $R$ centered at $x$, $V_0(x, R)$ its volume and $S_0(x)$ the scalar curvature with respect to $g_0$. In addition also assume that $S(x, t)\le Ct^{\alpha}, \forall t>>1$, for some $\alpha>0$ and the Harnack estimate $\frac{\partial}{\partial t} \left(tS(x, t)\right)\ge 0$ holds.
 Then $(M, g_0)$ is Ricci flat.

The same result holds if $S(x, t)\le C$, $\frac{\partial}{\partial t} S(x, t)\ge 0$ and the assumption (\ref{eq:thm41}) is replaced with
\begin{equation}\label{eq:thm42}
\lim_{R\to \infty}  R k(R)=0.
\end{equation}
\end{theorem}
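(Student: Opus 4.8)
The plan is to argue by contradiction, setting the pointwise lower bound for the scalar curvature along the flow that the Harnack hypothesis provides against the slow growth of the volume element furnished by Theorem 2.1 of \cite{NT}.

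First I would record the elementary identity for the K\"ahler--Ricci flow: writing $u(x,t)=\log\frac{\det g_0}{\det g(t)}(x)$ one has $\frac{\p}{\p t}u(x,t)=S(x,t)$, hence
\[
\int_0^t S(x,s)\,ds=\log\frac{\det g_0}{\det g(t)}(x)\ge 0,
\]
the inequality because $g(t)$ has nonnegative Ricci curvature (so $\frac{\p}{\p t}\det g(t)\le 0$), and the quantity is nondecreasing in $t$. Now suppose $(M,g_0)$ is not Ricci flat; since $\Ric(g_0)\ge 0$ this means $S_0\not\equiv 0$, and then the evolution inequality $\frac{\p}{\p t}S\ge \Delta S$ for the scalar curvature under the flow, together with the strong maximum principle, gives $S(x,t)>0$ for all $x\in M$, $t>0$. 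Fix $x_0\in M$ and set $c:=S(x_0,1)>0$. Under the first set of hypotheses the Harnack inequality $\frac{\p}{\p t}(tS)\ge 0$ yields $S(x_0,t)\ge c/t$ for $t\ge 1$, so $\int_0^t S(x_0,s)\,ds\ge c\log t$ for $t$ large; under the second set, $\frac{\p}{\p t}S\ge 0$ yields $S(x_0,t)\ge c$ for $t\ge 1$, so $\int_0^t S(x_0,s)\,ds\ge c(t-1)$.

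Next I would invoke Theorem 2.1 of \cite{NT}: combined with \eqref{eq:thm41} and the bound $S(x,t)\le Ct^\alpha$ it gives $\log\frac{\det g_0}{\det g(t)}(x)=o(\log t)$ as $t\to\infty$, whereas with \eqref{eq:thm42} and $S(x,t)\le C$ it gives $\log\frac{\det g_0}{\det g(t)}(x)=o(t)$. In either case this contradicts the lower bound just obtained. Therefore $S_0\equiv 0$, and a nonnegative symmetric form of vanishing trace being zero, $\Ric(g_0)\equiv 0$: $(M,g_0)$ is Ricci flat.

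The real content is Theorem 2.1 of \cite{NT} itself, namely that the time-integrated scalar curvature $\int_0^t S(x,s)\,ds$ grows sublogarithmically (respectively sublinearly) under the averaged curvature assumptions. Its proof runs through a Duhamel-type representation for $\log\frac{\det g_0}{\det g(t)}$ and the Li--Yau heat kernel and Green's function estimates, which use $\Ric\ge 0$ through Bishop--Gromov volume comparison; the averaged decay of $S_0$ in \eqref{eq:thm41}--\eqref{eq:thm42} then controls the principal term via that comparison, and the auxiliary bounds $S(x,t)\le Ct^\alpha$ or $S(x,t)\le C$ are what is needed to estimate the contributions coming from spatial infinity. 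Granting \cite{NT}, the steps above are routine; the only point needing a little care is the strong maximum principle on the noncompact, time-dependent background, applied in its local form.
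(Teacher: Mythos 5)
Your overall strategy coincides with the paper's: the proof indeed rests on Theorem 2.1 of \cite{NT} together with the assumed differential Harnack estimate, and your explicit maximum-principle step (showing $S(x_0,1)>0$ if $g_0$ is not Ricci flat) correctly makes precise what the paper leaves implicit in "the assumed Harnack estimate finishes the argument.''

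There is, however, a real gap in the middle step. You state that Theorem 2.1 of \cite{NT}, combined with \eqref{eq:thm41} and $S\le Ct^\alpha$, \emph{gives} $\log\frac{\det g_0}{\det g(t)}=o(\log t)$, as though this were the content of that theorem. It is not. Theorem 2.1 of \cite{NT} produces an \emph{implicit} inequality: with $-m(t)$ denoting the quantity to be estimated, it yields, for every scale $R>0$,
\[
-m(t)\le C_1\left[\left(1+\frac{t\,(1-m(t))}{R^2}\right)\int_0^R s\,k(s)\,ds-\frac{t\,m(t)(1-m(t))}{R^2}\right],
\]
so $m(t)$ appears on both sides and no bound follows until one both inputs an a priori estimate and optimises over $R$. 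The paper's proof runs a bootstrap: the hypothesis $S\le Ct^\alpha$ first gives the crude bound $-m(t)\le Ct^{\alpha+1}$; one then chooses $R=t^{\alpha+3}$ so that the self-referential terms on the right become $o(1)$, and only then does $\int_0^R s\,k(s)\,ds=o(\log R)$ convert into $-m(t)=o(\log t)$. Your phrase about "contributions coming from spatial infinity'' gestures at this, but the scale-selection and self-improvement argument is the whole point of the proof; asserting the $o(\log t)$ conclusion directly from \cite{NT} misattributes it and leaves the essential step undone. The same comment applies to the second case, where the paper takes $R=t/\epsilon(t)$ with $\epsilon(t)$ tied to the rate in \eqref{eq:thm42} and inputs the rough bound $-m(t)\le Ct$. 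You should spell out this bootstrap to have a complete argument.
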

\begin{proof} With the notation of \cite{NT}, it suffices to show that $-m(t)=o(\log(t))$. Then the assumed Harnack estimate finishes the argument.

The assumption of $S(x, t)$ implies a rough estimate that $-m(t)\le C t^{\alpha+1}, \forall t >>1$. The assumption of $k(s)$ and Theorem 2.1 of \cite{NT} implies that there is a constant $C_1=C_1(m)$,
$$
-m(t)\le C_1\left[\left( 1+\frac{t(1-m(t))}{R^2}\right)\int_0^R sk(s)\, ds-\frac{tm(t)(1-m(t))}{R^2}\right]
$$
for any $R>0$. Picking $R= t^{\alpha+3}$, with $t>>1$, applying the rough estimate $-m(t)\le C t^{\alpha+1}$ to  the right hand side above,  we have an improved estimate
$$
-m(t)\le C_1 \int_0^R sk(s)\, ds +o(1).
$$
The needed estimate $-m(t)=o(\log t)$ follows by noticing  $R= t^{\alpha+3}$ and the assumption that $\int_0^R s k(s)\, ds$ is of $o(\log(R))$.

For the second case, the proof is basically the same as that of \cite{DZ}. The stronger Harnack estimate $\frac{\partial}{\partial t} S(x, t)\ge 0$ implies that it is sufficient to show $-F(x, t)=o(t)$.

Let $R=\frac{t}{\epsilon(t)}$ for some $\epsilon(t)$ with $\lim_{t\to \infty}\epsilon(t)= 0$. Namely $R$ is slightly faster than $t$ as $t\to \infty$. We may assume that $\epsilon(t)$ is monotone decreasing function. Its precise form can be decided by the another function $\epsilon'(R)$ which is chosen so that
$$
\int_0^R s k(s)\, ds \le \epsilon'(R) R
$$
with $\lim_{R\to \infty}\epsilon'(R)\to 0$ and $\epsilon'(R)$ monotone non-increasing. Now we may simply let $\epsilon(t)=\sqrt{\epsilon'(t)}$.

Now we apply Theorem 2.1 of \cite{NT} and the rough estimate $-m(t)\le C t$ (which follows from $S(x, t)\le C$) to get an improved (or enforced) estimate
$$
-m(t)=o(t).
$$
This implies the claimed Ricci flatness via $S(x, 0)=0$.
\end{proof}

Variations of the above result include Corollary 2.1-2.5 of \cite{NT}.
This extends the result of \cite{DZ}  a little. All assumptions of the theorem, except the one for the average of the scalar curvature of the initial metric  hold for the case of K\"ahler-Ricci flow from a metric with nonnegative bisectional curvature and maximum volume growth, as well as  the expanding and steady gradient K\"ahler solitons with nonnegative Ricci curvature. Hence it implies a Ricci flatness result for them.  A result similar to Theorem \ref{thm:ricci-flatness} can be formulated for the intermediate cases with $\frac{\partial}{\partial t}(t^\theta S(x, t))\ge 0$ for some $\theta\in [0, 1]$.

Another testing case for Problem \ref{prob:1} is when there is a solution to the Poincar\'e-Lelong equation, namely a smooth function $u$ such that $\sqrt{-1}\partial \overline{\partial} u=\Ric$.

Related to the Ricci nonnegative K\"ahler manifold, a favorite problem of the author is to show the Liouville type theorem for plurisubharmonic functions on a complete K\"ahler manifold $M^m$ with nonnegative Ricci curvature, {\it namely any sub-$log(d(o, x))$ growth plurisubharmonic function $u$ must be constant.} In author's Ph. D. thesis of 1998, it was proved that $(\sqrt{-1}\partial \bar{\partial} u)^m=0$. This is also related to understanding the structure of manifolds whose Ricci curvature degenerates, but with some high ranks.

In summary,  the answer to Problem \ref{prob:1} is affirmative for all shrinking gradient solitons, and essentially all gradient K\"ahler steady and expanding solitons. Moreover for the case of K\"ahler-Ricci expanding solitons, Deng-Zhu \cite{DZ} proved that the manifold is isometric to the complex Euclidean space. Hence even though the answers to Problem \ref{prob:1} are affirmative for gradient Ricci (K\"ahler-Ricci) soltions (for steady gradient solitons one needs to assume additionally that there is a point where $\nabla f=0$), it is a rigidity result of the flat metric (the Gaussian soliton) for the case of expanding gradient solitons. The Ricci flatness can be obtained via the K\"ahler-Ricci flow, however assuming {\it the long time existence and a differential Harnack estimate} and others.

 For the solitons which are not K\"ahler, Theorem 20.2 of \cite{Ham-Sing} reduces the problem to the existence of a Ricci flat metric on $\mathbb{S}^{n-1}$. Namely {\it if there is no Ricci flat metric on $\mathbb{S}^{n-1}$, then Problem \ref{prob:1} has a positive answer for the gradient steady soliton assuming additionally the potential function $f$ with $\nabla f(o)=0$ at some point $o$}.  Compared with the proof in Sections 2 and 3  of Theorem \ref{thm:ham} the main difficulty for Problem \ref{prob:1} is lack of some  `convexity' associated with the nonnegativity of the Ricci curvature.

\bigskip

\noindent\textbf{Acknowledgments.} {The author thanks J. Wang, G. Liu for their interest in the Problem \ref{prob:1},  Y. Zhang for discussions and reading the first draft. He also thanks  L. Russell and J. Dunworth for   the Alpenglow of Tahquitz Peak at their beautiful retreat home, which inspired some of the arguments, and G. Zhang for the reference on the Kubota’s inequality.}

\end{document}